\newtheorem{Theorem}{Theorem}[section]
\newtheorem{lemma}[Theorem]{Lemma}
\newtheorem{proposition}[Theorem]{Proposition}
\newtheorem{corollary}[Theorem]{Corollary}
\newtheorem{remark}[Theorem]{Remark}
\newtheorem*{prob}{Problem J.7}
\newtheorem{defi}[Theorem]{Definition}
\newtheorem*{thrma}{Theorem A}
\newtheorem*{thrmb}{Theorem B}
\newtheorem*{thrmc}{Theorem C}
\newtheorem*{exam}{Examples}
\newcommand{\bigzero}{\mbox{\normalfont\Large\bfseries 0}}
\newcommand{\rvline}{\hspace*{-\arraycolsep}\vline\hspace*{-\arraycolsep}}
\def\C{\mathbb{C}}
\def\R{\mathbb{R}}
\def\Z{\mathbb{Z}}
\def\D{\mathbb{D}}
\newcommand{\tpitchfork}{%
  \vbox{
    \baselineskip\z@skip
    \lineskip-.52ex
    \lineskiplimit\maxdimen
    \m@th
    \ialign{##\crcr\hidewidth\smash{$-$}\hidewidth\crcr$\pitchfork$\crcr}
  }%
}
\newcommand{\address}[1]{\gdef\@address{#1}}
\newcommand{\email}[1]{\gdef\@email{\url{#1}}}
\newcommand{\@endstuff}{\par\vspace{\baselineskip}\noindent\small
\begin{tabular}{@{}l}\scshape\@address\\\textit{E-mail address:} \@email\end{tabular}}
\title{Embeddings and disjunction of Lagrangian pinwheels via rational blow-ups}
\author{Nikolas Adaloglou}
\date{}
\address{Mathematical Institute, Leiden University, The Netherlands}
\email{n.adaloglou@math.leidenuniv.nl}
\begin{document}

\maketitle

\begin{abstract}
We use the symplectic rational blow-up to study some Lagrangian pinwheels in symplectic rational manifolds. In particular, we determine which symplectic forms in the threefold blow-up of $\C P^2$ carry Lagrangian projective planes that can be made disjoint by a Hamiltonian isotopy. In addition, we show that such a disjunction is not possible in del Pezzo surfaces with Euler characteristic between $4$ and $7$. Finally, we determine which symplectic forms on $S^2\times S^2$ carry a Lagrangian $L_{3,1}$ pinwheel, answering a question of J. Evans.
\end{abstract}

\section*{Introduction}
Lagrangian pinwheels are certain simple two-dimensional CW-complexes in symplectic $4$-manifolds. They come up as vanishing cycles of surface quotient singularities, generalizing the more usual situation where Lagrangian spheres can be viewed as vanishing cycles of nodal singularities. In this note, we are going to study the behaviour of the Lagrangian pinwheels  $L_{2,1}$, which is just a usual Lagrangian $\R P^2$, and $L_{3,1}$ in some rational symplectic $4$-manifolds.\par 

Various rigidity results have been obtained for Lagrangian pinwheels. For example, Evans-Smith showed in \cite{ES} a rich intersection pattern for Lagrangian pinwheels in $\C P^2$, mimicking the algebro-geometric pattern for $\mathbb{Q}$-Gorenstein Deformation of $\C P^2$. Specifying to the case that the pinwheel is $\R P^2$, Shevchishin-Smirnov, in \cite{SS}, proved a non-squeezing result for Lagrangian $\R P^2$'s in $X_3$, the triple blow-up of $\C P^2$, completely determining which symplectic forms on $X_3$ can carry a Lagrangian $\R P^2$.\par 

The main technique of \cite{SS} is a symplectic surgery known as the rational blow-up. The rational blow-up removes a neighborhood of the Lagrangian pinwheel and glues back a certain configuration of symplectic spheres. A key feature of this surgery is that it preserves the rationality of symplectic manifolds, as was proved by Park-Shin in \cite{ParkShin}, making it possible to determine the symplectic manifold resulting after the surgery.\par

Our goal with this paper is to show the surprising usefulness of the symplectic rational blow-up on providing obstructions to certain behaviours of Lagrangian pinwheels. Our first such result is to determine exactly when a Hamiltonian isotopy can make two Lagrangian projective planes disjoint. We consider $\{H,E_1,E_2,E_3\}$ the standard basis for $H_2(X_3,\Z)$ and $h,\mu_1,\mu_2,\mu_3$ the corresponding periods (see Section \ref{basicsympgeometry} for more details).  We then have:

\begin{thrma}\emph{(=Theorem \ref{tworp2thrm})}
Let $L_1$ and $L_2$ be Lagrangian $\R P^2$'s in $(X_3,\omega)$. They can be made disjoint with a Hamiltonian isotopy if and only if the periods $h,\mu_1,\mu_2,\mu_3$ of $\omega$ satisfy the inequality

\[\mu_1+\mu_2+\mu_3<h.\]
\end{thrma}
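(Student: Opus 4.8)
The plan is to translate the disjunction problem into the existence of a configuration of symplectic spheres, using the rational blow-up as a dictionary in both directions. A Weinstein neighborhood of a Lagrangian $\R P^2=L_{2,1}$ is a standard rational homology ball with contact boundary the lens space $L(4,1)$, and the rational blow-up replaces it with the minimal resolution of the $\tfrac14(1,1)$ cyclic quotient singularity, namely a single symplectic $(-4)$-sphere. Hence, once $L_1$ and $L_2$ have been made disjoint, I can rationally blow up both simultaneously and obtain a closed symplectic manifold $(Z,\omega_Z)$ containing two \emph{disjoint} symplectic $(-4)$-spheres $C_1,C_2$, whose areas are controlled by (and can be made as small as) the capacities of the two Weinstein neighborhoods. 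Each surgery raises $b_2$ by one, so $b_2(Z)=6$ and $\chi(Z)=8$; by Park--Shin \cite{ParkShin} the manifold $Z$ is rational, hence symplectomorphic to the fivefold blow-up $X_5$. This reduces Theorem~A to deciding when $X_5$ carries two disjoint symplectic $(-4)$-spheres in the relevant classes with arbitrarily small area.

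For the necessity direction I would first pin down the homology. Identifying $H_2(Z;\Z)$ with the standard lattice $\langle H,E_1,\dots,E_5\rangle$, the two surgeries introduce small periods $\delta_1,\delta_2$ while $h,\mu_1,\mu_2,\mu_3$ are unchanged, so $[\omega_Z]$ has periods $(h;\mu_1,\mu_2,\mu_3,\delta_1,\delta_2)$. The spheres must satisfy $C_i^2=-4$, $C_1\cdot C_2=0$, the adjunction equality, and $\omega_Z\cdot C_i>0$; combining these with positivity of intersections against the exceptional and anticanonical classes should force the binding $(-4)$-class to be, up to the Cremona (Weyl) symmetry of $X_5$, the class $H-E_1-\cdots-E_5$, whose area is $h-\mu_1-\mu_2-\mu_3-\delta_1-\delta_2$. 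Since the surgery parameters are strictly positive but can be taken arbitrarily small, positivity of this area yields exactly $h>\mu_1+\mu_2+\mu_3$. The boundary case $h=\mu_1+\mu_2+\mu_3$ is precisely the monotone degree-$6$ del Pezzo $X_3$ (with $\chi=6$), where disjunction must therefore fail, in agreement with the rigidity statements announced in the abstract. I expect this step to be the main obstacle: correctly computing the classes $[C_1],[C_2]$ produced by the surgery and proving that among all of the positivity constraints the single inequality $h>\mu_1+\mu_2+\mu_3$ is the one that binds.

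For sufficiency I would run the construction in reverse. Assuming $h>\mu_1+\mu_2+\mu_3$, I would exhibit on $X_5$, for suitably small $\delta_1,\delta_2$, an explicit configuration of two disjoint symplectic $(-4)$-spheres in the required classes---most cleanly through an almost toric model in the spirit of \cite{SS}, where the inequality is exactly the condition guaranteeing enough room to place both spheres---and then rationally blow them down. By the symplectic interpretation of the rational blow-down, each $(-4)$-sphere is replaced by a Lagrangian $\R P^2$, and disjointness of the spheres produces two disjoint Lagrangian $\R P^2$'s in $(X_3,\omega)$. To conclude that the \emph{given} pair $L_1,L_2$ can be separated, and not merely that some disjoint pair exists, I would invoke a uniqueness-up-to-Hamiltonian-isotopy input for Lagrangian $\R P^2$'s in $(X_3,\omega)$, so that $L_1,L_2$ are Hamiltonian isotopic to the explicitly constructed disjoint pair.
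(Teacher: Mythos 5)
Your overall architecture is the same as the paper's: rationally blow up both Lagrangians, use Park--Shin rationality to identify the result with $X_5$, extract the inequality from area positivity for necessity, reverse the surgery for sufficiency, and upgrade ``some disjoint pair exists'' to ``the given pair can be separated'' via Hamiltonian transitivity (this last step is exactly the paper's Lemma \ref{extoham}, using Corollary 1.5 of \cite{LLWuniq}). However, there is a genuine gap at precisely the step you flag as the main obstacle, and it is not merely a computation you deferred: your homological bookkeeping for the surgery is wrong. You treat the rational blow-up as if it were an ordinary blow-up, claiming that $h,\mu_1,\mu_2,\mu_3$ persist unchanged and that $[\omega_Z]$ has periods $(h;\mu_1,\mu_2,\mu_3,\delta_1,\delta_2)$. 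This is false: the identification of $X_3-\nu(L_1\cup L_2)$ with $X_5-\nu(S_1\cup S_2)$ only matches up classes pairing trivially $\bmod\ 2$ with $[L_1]=H$ and $[L_2]=E_1+E_2+E_3$, namely $2H$, $2E_k$ and $E_i-E_j$; the classes $H$ and $E_i$ themselves do not survive to the complement, and the complement lattice sits with nontrivial index on both sides. The correct identification (the paper's Lemma \ref{existbasislemma} and Proposition \ref{2rpnec}) uses the basis $\{Z_\infty,F,\widetilde{E}_0,\dots,\widetilde{E}_3\}$ coming from the blown-up Hirzebruch surface $F_4$, with the two $(-4)$-spheres in $Z_\infty$ and $\widetilde{E}_0-\widetilde{E}_1-\widetilde{E}_2-\widetilde{E}_3$; one finds $2H\leftrightarrow Z_\infty+4F$, hence $\alpha=2h$, and (via the Shevchishin--Smirnov relations) $\tilde{\mu}_0=\tfrac{\mu_1+\mu_2+\mu_3}{2}+\epsilon$ --- note the halving, which your period vector misses entirely. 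The binding constraints are then not positivity on a single Cremona representative $H-E_1-\cdots-E_5$, but the pair of inequalities $\tilde{\omega}(F-\widetilde{E}_0)>0$ and $\tilde{\omega}(Z_\infty)>0$, which combine as $\tfrac{h}{2}>\beta>\tilde{\mu}_0$ to give $h>\mu_1+\mu_2+\mu_3$. Your guessed class would formally output the right inequality, but the lattice claim underlying it does not hold, so the necessity argument as proposed does not close.

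On sufficiency, your almost-toric sketch is a legitimate alternative in spirit (the paper's Remark after Theorem \ref{thrmss} attributes such a visible-Lagrangian approach to Evans for the one-Lagrangian case), but the paper instead characterizes the K\"ahler cone of the complex manifold obtained by blowing up $F_4$ (Lemma \ref{kconex5}) and applies Nakai--Moishezon to realize the prescribed periods $\alpha=2h$, $\alpha-4\beta=4\epsilon_1$, $\tilde{\mu}_i$ by a K\"ahler form, then blows down both spheres. Either route requires the correct classes $Z_\infty$ and $\widetilde{E}_0-\widetilde{E}_1-\widetilde{E}_2-\widetilde{E}_3$ and the halved period relations, so fixing the necessity computation is a prerequisite for your sufficiency step as well. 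One smaller inaccuracy: rationality plus $b_2=6$ pins down $Z$ only up to diffeomorphism, not symplectomorphism; the symplectic form is then handled cohomologically via Theorem \ref{basicsymptoprational}.
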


A well studied class of rational symplectic manifolds are the symplectic del Pezzo surfaces $\D_n$. These are rational symplectic manifolds which are also monotone. Theorem $A$ shows that any two Lagrangian projective spaces must intersect in $\D_3$. We then generalize this to show

\begin{thrmb}[= Theorem \ref{delpezzothrm}]
In $\D_n$, for $n\leq 6$, any two Lagrangian $\R P^2$'s must intersect.
\end{thrmb}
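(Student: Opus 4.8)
The plan is to assume for contradiction that the monotone $(\D_n,\omega)$ carries two disjoint Lagrangian $\R P^2$'s $L_1,L_2$, and to play them off against the rational blow-up exactly as in the proof of Theorem~\ref{tworp2thrm}. First I would perform the rational blow-up simultaneously along $L_1$ and $L_2$; since the two pinwheels are disjoint, the two surgeries are supported in disjoint neighborhoods and can be carried out independently. By Park--Shin \cite{ParkShin} the resulting closed symplectic manifold $(Z,\omega_Z)$ is again rational, and since each blow-up raises $b_2$ by one while keeping $b_2^+=1$, a count of Betti numbers (and $b_2(Z)=n+3\neq 2$, ruling out $S^2\times S^2$) identifies $Z\cong \C P^2\#(n+2)\overline{\C P^2}$. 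By construction $Z$ contains two disjoint symplectically embedded $(-4)$-spheres $C_1,C_2$, and the genus-zero adjunction formula gives $C_i^2=-4$, $c_1(Z)\cdot C_i=-2$, and $C_1\cdot C_2=0$.

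The next step is to pin down $[\omega_Z]$ using monotonicity. Because the surgery alters $\omega$ only inside the Weinstein neighborhoods of $L_1,L_2$, the class $[\omega_Z]$ restricts on $W^{\perp}:=\langle C_1,C_2\rangle^{\perp}$ to the lift of the monotone class $c_1(\D_n)$, while $\omega_Z\cdot C_i=a_i$ is the small positive area of the glued-in sphere. Writing $[\omega_Z]=\kappa+w$ and $c_1(Z)=\kappa+\xi$ in $H_2(Z;\R)$, where $\kappa\in W^{\perp}\otimes\R$ is the lift of $c_1(\D_n)$ (the canonical class is natural off the balls, so the $W^{\perp}$-components of $[\omega_Z]$ and $c_1(Z)$ coincide), the pairing conditions force $\xi=\tfrac12(C_1+C_2)$ and $w=-\tfrac14(a_1C_1+a_2C_2)$; the identities $\kappa^2=9-n$ and $c_1(Z)^2=7-n$ serve as a consistency check. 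The payoff is the clean formula, valid for every exceptional class $E$ (so that $c_1(Z)\cdot E=1$),
\[
[\omega_Z]\cdot E \;=\; c_1(Z)\cdot E \;-\;(\xi-w)\cdot E \;=\; 1-\frac{(2+a_1)\,(E\cdot C_1)+(2+a_2)\,(E\cdot C_2)}{4}.
\]

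With this in hand the obstruction is immediate in principle. One checks $c_1(Z)\cdot[\omega_Z]>0$, so $[\omega_Z]$ lies in the forward symplectic cone, and by the description of the symplectic cone of a rational surface this requires $[\omega_Z]\cdot E>0$ for every exceptional class $E$. The displayed formula shows $[\omega_Z]\cdot E\le 0$ as soon as there is an exceptional $E$ with $E\cdot C_1,E\cdot C_2\ge 0$ and $E\cdot(C_1+C_2)\ge 2$, since then the subtracted term is at least $\tfrac{2(E\cdot(C_1+C_2))}{4}\ge 1$. Thus the theorem reduces to a purely homological statement: \emph{for $n\le 6$ there is always such an exceptional class of $Z$.} I would prove this by classifying the admissible pairs $(C_1,C_2)$ — the conditions $C_i^2=-4$, $c_1\cdot C_i=-2$, $C_1\cdot C_2=0$, together with the lattice requirement that rationally blowing $C_1,C_2$ down returns the unimodular $H_2(\D_n)$ with its monotone class, leave only finitely many configurations up to automorphisms of $H_2(Z)$. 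For the smallest case this is especially transparent: one checks directly that $\C P^2\#5\overline{\C P^2}$ admits no two disjoint $(-4)$-spheres with $c_1\cdot C_i=-2$, so the surgery for $\D_3$ cannot exist at all, recovering the monotone equality case $\mu_1+\mu_2+\mu_3=h$ of Theorem~\ref{tworp2thrm}. For each remaining admissible configuration one exhibits the bad class explicitly (a single exceptional divisor $E_i$, or a line class $H-E_i-E_j$, already works in the cases I have checked).

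I expect the main obstacle to be this last step: organizing the classification of the pairs $(C_1,C_2)$ and verifying uniformly that a suitable $E$ exists, since this is exactly where the hypothesis $n\le 6$ must be consumed — for larger $n$ the additional exceptional classes enlarge the symplectic cone and the obstruction is expected to disappear. A secondary point I would treat with care is the period normalization of Step two: that the rational blow-up genuinely leaves $[\omega]$ unchanged on $W^{\perp}$, and that the sphere areas $a_i$ may be taken arbitrarily small, so that the contradiction is insensitive to the sizes of the removed Lagrangian neighborhoods.
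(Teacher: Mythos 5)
Your mechanism is sound and genuinely different from the paper's route: after the simultaneous rational blow-up (the paper's first move as well), you seek a direct cohomological contradiction inside $Z\cong X_{n+2}$, and your algebra checks out — with $W=\langle C_1,C_2\rangle$ one indeed gets $\xi=\tfrac{1}{2}(C_1+C_2)$ and $w=-\tfrac{1}{4}(a_1C_1+a_2C_2)$, hence $[\omega_Z]\cdot E=1-\tfrac{1}{4}\big((2+a_1)\,E\cdot C_1+(2+a_2)\,E\cdot C_2\big)$, so any exceptional class $E$ with $E\cdot C_i\geq 0$ and $E\cdot(C_1+C_2)\geq 2$ contradicts positivity on exceptional classes (Theorem \ref{basicsymptoprational}); your consistency check $\kappa^2=c_1(Z)^2-\xi^2=(7-n)+2=9-n$ also works. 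The paper instead argues by induction: Lemma \ref{inductionstep} (using Theorem 1.2.7 of \cite{McDOp} applied to the two $(-4)$-spheres) produces a symplectic exceptional sphere in $E_4$ disjoint from both Lagrangians, blowing it down passes from $\D_n$ to $\D_{n-1}$, and everything reduces to the $\D_3$ case of Theorem \ref{tworp2thrm} — thereby never requiring any classification of $(-4)$-configurations in $X_7$ or $X_8$.

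However, there are two genuine problems with your write-up. First, the step that actually carries the theorem — classifying the admissible pairs $(C_1,C_2)$ in $X_{n+2}$ for $n=3,\dots,6$ and exhibiting the obstructing exceptional class in each configuration — is not done; ``in the cases I have checked'' is precisely the content of the statement, and it is exactly where the hypothesis $n\leq 6$ must be consumed. Second, your base-case shortcut is false: $X_5$ \emph{does} contain two disjoint classes of square $-4$ with $c_1\cdot C_i=-2$, namely $Z_\infty$ and $\widetilde{E}_0-\widetilde{E}_1-\widetilde{E}_2-\widetilde{E}_3$ of Lemma \ref{existbasislemma}, and these are even realized by disjoint symplectic spheres whenever $\sum\mu_i<h$ (the sufficiency half of Theorem \ref{tworp2thrm}), so the surgery out of $\D_3$ is not homologically obstructed. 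The contradiction for $\D_3$ is area-theoretic, and your own formula delivers it: $E=F-\widetilde{E}_0$ is exceptional with $E\cdot C_1=E\cdot C_2=1$, giving $[\omega_Z]\cdot E=-\tfrac{1}{4}(a_1+a_2)<0$. To complete your route, replace the classification by a normalization of the pair up to diffeomorphism — first fix $[L_1],[L_2]$ by Lemma \ref{existence2rp2}, then normalize $(C_1,C_2)$ via the transitivity result (Prop.\ 3.2 of \cite{BLW}) as in Lemma \ref{existbasislemma} — and then verify the existence of the bad $E$ in each normalized configuration for $n=4,5,6$; this is real work that the paper's inductive argument avoids entirely.
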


The last theorem we are going to prove concerns the pinwheel $L_{3,1}$, which answers a question of Evans in \cite{evans_2023}.

\begin{thrmc}[=Theorem \ref{jquestion}]
In $(S^2\times S^2,\omega_\lambda)$, the $\Z_3$-homology class $[A+B]$ carries a Lagrangian $L_{3,1}$ pinwheel if and only if
\[\frac{1}{2}<\lambda<2.\]
\end{thrmc}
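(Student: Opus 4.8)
The plan is to reduce the existence of the pinwheel to a positivity question on $X_3$ via the symplectic rational blow-up, exactly as in the proof of Theorem \ref{tworp2thrm}. Recall that $L_{3,1}$ is the vanishing cycle of the Wahl singularity $\frac{1}{9}(1,2)$, whose minimal resolution is the linear chain of two symplectic spheres $C_1,C_2$ with $C_1^2=-5$, $C_2^2=-2$ and $C_1\cdot C_2=1$ (from the Hirzebruch--Jung expansion $\tfrac{9}{2}=5-\tfrac12$). A neighbourhood of the pinwheel is the rational homology ball $B_{3,1}$ filling the lens space $L(9,2)$, and the rational blow-up removes it and glues in this chain. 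Throughout I normalise $\omega_\lambda(A)=1$, $\omega_\lambda(B)=\lambda$.

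For the ``only if'' direction, suppose $(S^2\times S^2,\omega_\lambda)$ contains such a pinwheel $L$ and rationally blow it up. The Euler characteristic changes by $\chi(P)-\chi(B_{3,1})=3-1=2$, so the resulting closed symplectic manifold $(\widetilde X,\widetilde\omega)$ has $\chi=6$; it is simply connected, and by the rationality theorem of Park--Shin it is rational, hence diffeomorphic to $X_3=\C P^2\#3\overline{\C P^2}$. It carries the symplectic chain $C_1\cup C_2$ together with the geometry of $S^2\times S^2$ away from $L$. Writing the periods $(h,\mu_1,\mu_2,\mu_3)$ of $\widetilde\omega$ in the basis $\{H,E_1,E_2,E_3\}$, the condition that $\widetilde\omega$ lie in the symplectic cone of $X_3$ is, by Li--Liu, that $\widetilde\omega^2>0$ and $\widetilde\omega(E)>0$ for every exceptional class $E$, together with $\widetilde\omega(C_i)>0$. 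Matching $\widetilde\omega$ with $\omega_\lambda$ on the common region $M=(S^2\times S^2)\setminus\nu(L)$ expresses these periods in terms of $\lambda$ and the (small, positive) areas of $C_1,C_2$; substituting, the two binding inequalities become $2\,\omega_\lambda(A)-\omega_\lambda(B)>0$ and $2\,\omega_\lambda(B)-\omega_\lambda(A)>0$, that is $\lambda<2$ and $\lambda>\tfrac12$. As a consistency check, these two inequalities are exchanged by the symplectomorphism of $(S^2\times S^2,\omega_\lambda)$ swapping the two factors, which sends $\lambda\mapsto 1/\lambda$ and fixes the class $[A+B]$.

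For the ``if'' direction I would reverse the construction. Given $\tfrac12<\lambda<2$, the computation above produces a class on $X_3$ lying in its symplectic cone; I would realise it by a symplectic form carrying a symplectic $(-5,-2)$ chain $C_1\cup C_2$ in the requisite homology classes, passing from the homology data to an actual orthogonal symplectic configuration by inflation. Rationally blowing down this chain (Symington) glues back $B_{3,1}$, whose core is a Lagrangian $L_{3,1}$, and the lattice bookkeeping identifies the result as $(S^2\times S^2,\omega_\lambda)$ with the core in the class $[A+B]$ modulo $3$. Alternatively one can exhibit the whole picture algebro-geometrically, since $\C P^1\times\C P^1$ admits a $\mathbb{Q}$-Gorenstein degeneration to a surface with one $\frac19(1,2)$ singularity whose minimal resolution is $X_3$, and the pinwheel is the Lagrangian vanishing cycle of the smoothing.

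The main obstacle is the bookkeeping that pins everything down: identifying the precise classes of $C_1,C_2$ in $H_2(X_3)$ so that the blow-down yields the \emph{even} form of $S^2\times S^2$ rather than $\C P^2\#\overline{\C P^2}$ and the core sits in exactly $[A+B]$ rather than some other $\Z_3$-class (a genuine parity subtlety, since the naive effective representatives land in the odd lattice), and checking that among the infinitely many symplectic-cone inequalities for $X_3$ precisely the two above are binding. The ``if'' direction additionally requires the sufficiency statement -- that whenever $\tfrac12<\lambda<2$ the symplectic chain can actually be realised -- which I expect to be the hardest input, to be supplied by inflation or an explicit almost-toric model for $\omega_\lambda$.
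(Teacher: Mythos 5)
Your skeleton is exactly the paper's: necessity via rational blow-up of the pinwheel to $(X_3,\tilde{\omega})$ and the symplectic cone of $X_3$ (Proposition \ref{l31nec}), sufficiency via the visible Lagrangian. Two preliminary remarks. First, sufficiency is not ``the hardest input'': the paper simply quotes Evans's own construction (Lemma \ref{l31suf}, Section J.7 of his book), which already produces a visible $L_{3,1}$ in $[A+B]$ for all $\frac{1}{2}<\lambda<2$; no inflation, rational blow-down, or $\mathbb{Q}$-Gorenstein degeneration is needed, and your reverse-surgery plan, while plausible, would only re-derive the construction that motivated the problem in the first place. Second, your normalization $\omega_\lambda(A)=1$, $\omega_\lambda(B)=\lambda$ is opposite to the paper's, but harmless since the condition is symmetric under $\lambda\mapsto 1/\lambda$.

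The genuine gap is the step you yourself flag as ``the main obstacle'' and then leave undone: identifying $[C_1],[C_2]\in H_2(X_3,\Z)$ and verifying which symplectic-cone inequalities bind. Without this, the assertion that ``substituting, the two binding inequalities become $2\omega_\lambda(A)-\omega_\lambda(B)>0$ and $2\omega_\lambda(B)-\omega_\lambda(A)>0$'' is unsupported: a symplectic $(-5)$-sphere could a priori lie in $-H-2E_1+E_2+E_3$ or $-E_1-E_2$ rather than $-2H+3E_1$, and a $(-2)$-sphere in $E_1-E_2$ rather than $H-E_1-E_2-E_3$, and different choices yield different period systems and different conclusions. The paper closes this as follows: after listing the possible classes of symplectic $(-5)$- and $(-2)$-spheres up to diffeomorphism, the identification of $S^2\times S^2-\nu L$ with $X_3-\nu\mathcal{S}_3$ induces a lattice isomorphism between $\langle A+2B,\,2A+B\rangle$ (the mod-$3$ orthogonal complement of $L$) and $\langle S_1,S_2\rangle^{\perp}\subset H_2(X_3,\Z)$, which forces $[S_1]=-2H+3E_1$ and $[S_2]=H-E_1-E_2-E_3$; the same lattice computation disposes of your parity worry, since $\langle S_1,S_2\rangle^{\perp}$ is an even rank-two lattice with $A+2B\leftrightarrow 3H-2E_1-E_2$ and $2A+B\leftrightarrow 3H-2E_1-E_3$, confirming that the blow-down is $S^2\times S^2$ (not $X_1$) with the core in $[A+B]$. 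Matching areas, $a+2b=3h-2\mu_1-\mu_2$ and $2a+b=3h-2\mu_1-\mu_3$, together with positivity of the spheres, $-2h+3\mu_1=9\epsilon_1>0$ and $h-\mu_1-\mu_2-\mu_3=9\epsilon_2>0$, one inverts the system to get $\mu_2=\frac{2a-b}{3}-(\epsilon_1+5\epsilon_2)$ and $\mu_3=\frac{2b-a}{3}-(\epsilon_1+5\epsilon_2)$, so that $\mu_2,\mu_3>0$ give precisely $2a>b$ and $2b>a$. This inversion is the derivation your proposal gestures at but does not carry out, and it is the actual mathematical content of the necessity direction.
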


The paper is organized as follows: First, we recall basic properties Lagrangian pinwheels, the symplectic rational blow-up/down and the basic symplectic geometry of symplectic rational surface. In the second section we prove Theorem $A$ and then use induction to prove Theorem $B$. In the last section, Theorem $C$ is proved.

\section*{Aknowledgements}
The author is grateful to: George Politopoulos for his constant help with the algebro-geometric notions and everything else; Johannes Hauber for many useful and interesting discussions; Jesse Vogel for assisting computations with his Python knowledge; Felix Schlenk for providing a hospitable and stimulating environment in Neuch\^{a}tel in October 2022; Jonny Evans for his interest in this work and for suggesting to look at Problem J.$7$ from his new book. Finally, the author is also grateful to Federica Pasquotto, his PhD advisor, for her overall guidance and support. This paper was written as part of the author's PhD thesis in Leiden University.

\section{Recollections}

\subsection{Lagrangian pinwheels and the Symplectic Rational Blow-up/down}\label{ratblowupsec}
Let us recall the definitions around Lagrangian pinwheels, following \cite{BS}(see also \cite{khodorovskiy2013symplectic},\cite{ES}). 

\begin{defi}
An $n$-pinwheel, for $n\in \mathbb{N}_{\geq 2}$, is the topological space obtained by gluing a $2$-cell to a circle along the map $z\rightarrow z^{n}$.
\end{defi} 

Pinwheels where first studied by Tatyana Khodorovskiy in \cite{khodorovskiy2013symplectic}, where she showed that they can be considered as the Lagrangian skeleton to certain Stein manifolds. These are the $\mathbb{Q}$-homology balls $B_{n,1}$ and have as boundary the Lens space $L(n^2,n-1)$. Even though it is somewhat technical to formalize what it means for a CW-complex to be a Lagrangian subspace, based on Khodorovskiy's work we can give the following succinct definition:

\begin{defi}
In a symplectic manifold $(M^4,\omega)$, a Lagrangian pinwheel $L_{n,1}$ is an $n$-pinwheel that has an open neighborhood symplectomorphic to an open neighborhood of the Lagrangian skeleton of $B_{n,1}$.
\end{defi}

When $n=2$ the corresponding $L_{2,1}$ Lagrangian Pinwheel is just a Lagrangian $\R P^2$ and  $B_{2,1}$ is symplectomorphic to the unit cotangent disc bundle of $\R P^2$. \par

The $B_{n,1}$ play an important role in Algebraic Geometry as they appear as Milnor Fibers of some quotient singularities. In this context, the core pinwheel is the vanishing cycle of the smoothing. In the symplectic topology of $4$-manifolds, the come up in the context of the symplectic rational blow-up and down. We refer the reader to Chapters 7-9 of \cite{evans_2023} for a more thorough discussion on the algebro-geometric aspects of pinwheels.\par

The symplectic rational blow-down, first studied by Margaret Symington in \cite{Sym}, is a surgery done around a configuration $\mathcal{S}_n$ of transversally intersecting symplectic spheres. One removes a neighborhood of $\mathcal{S}_n$ and glues back the symplectic rational homology ball $B_n$, which has as core the Lagrangian pinwheel $L_{n,1}$.  \par

Interestingly the symplectic rational blow-down preceded the inverse operation, the symplectic rational blow-up, which was introduced in \cite{khodorovskiy2013symplectic}. As the inverse of the symplectic rational blow-down, the symplectic rational blow-up replaces the a symplectically embedded $B_{n,1}$ with the configuration $\mathcal{S}_n$.\par

Let us note here that blowing-up an $L_{2,1}$, i.e. a Lagrangian $\R P^2$ replaces it with $(-4)$ symplectic sphere. Analogously, blowing-up an $L_{3,1}$ replaces it with symplectic $(-5)$ and $(-2)$ spheres, having a unique intersection which is symplectically orthogonal. 

Both operations are symplectic surgeries in the sense that when performed on a symplectic manifold $(M,\omega)$ they produce a new $4$-manifold $\widetilde{M}$ which also carries a symplectic form $\tilde{\omega}$. Focusing on the symplectic rational blow-up, since the surgery takes place around a small neighborhood of an embedded $L_n$, one has a symplectomorphism between $(X-\nu L_n,\omega)$ and $(\widetilde{X}-\nu\mathcal{S}_n,\tilde{\omega})$, where $\nu$ denotes a small tubular neighborhood of the relevant subspaces. In addition, a simple Mayer-Vietoris argument shows that if $\widetilde{X}$ is the rational blow-up of $X$ along an $L_{n}$ then $b_2(\widetilde{X})=b_2(X)+n$.\par

\subsection{Basic Symplectic Geometry of rational surfaces}\label{basicsympgeometry}

The symplectic $4$-manifolds that we will study are the symplectic rational surfaces. The underlying smooth manifold is either $S^2\times S^2$ or the $k$-th point blow-up of $\C P^2$, denoted by $X_k$. We will denote the standard basis of $H_2(X_k,\Z)$ with $H,E_i$ for $1\leq i\leq k$.\par 

Using Seiberg-Witten theory one can establish the following facts for symplectic rational surfaces that make them very simple to work with:

\begin{Theorem}\emph{(\cite{li2008space})}\label{basicsymptoprational}
Let $X$ be a smooth, oriented rational manifold and $\Omega\in H^2(X,\R)$. Suppose that $\Omega^2([X])>0$.  Then
\begin{itemize}
    \item $\Omega$ can be represented by a symplectic form $\omega$, compatible with the orientation, if and only if $\Omega(E)>0$ for all $E\in H_2(X,\Z)$ where $E$ can be represented by a smooth $(-1)$-sphere. In fact, any symplectic form $\omega$ carries a symplectic $(-1)$-spheres in any homology class that caries smooth $(-1)$-spheres.
    
    \item Any two symplectic forms representing $\Omega$ are diffeomorphic.
\end{itemize}

\end{Theorem}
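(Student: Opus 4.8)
The plan is to derive both bullets from Seiberg--Witten theory together with Taubes' identification $\mathrm{SW}=\mathrm{Gr}$, which is the engine behind every assertion here. I fix the orientation of $X$ and a canonical class $K$ compatible with it; since $b^+(X)=1$ for a rational surface, the relevant wall-crossing is governed by the forward cone of the intersection form, and the $(-1)$-sphere classes appearing in the statement should be read with the orientation making them \emph{exceptional}, i.e. $E^2=-1$ and $E\cdot K=-1$ (a class and its negative are both carried by smooth $(-1)$-spheres, so some such normalization is forced). For necessity, suppose $\Omega=[\omega]$ with $\omega$ symplectic and orientation-compatible. Taubes shows that for a symplectic $4$-manifold the Seiberg--Witten invariant of the canonical $\mathrm{Spin}^c$ structure is $\pm 1$; combined with wall-crossing and $\mathrm{SW}=\mathrm{Gr}$ in the $b^+=1$ setting, this forces every exceptional class $E$ to carry, for generic compatible $J$, an embedded $J$-holomorphic and hence symplectic sphere. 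This is exactly the ``in fact'' clause, and positivity of symplectic area on $J$-holomorphic curves then yields $\omega(E)=\int_E\omega>0$.

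For sufficiency I would construct a symplectic representative of an admissible $\Omega$ by inflation. Beginning from a fixed K\"ahler form $\omega_0$ (Fubini--Study on $\C P^2$, a product form on $S^2\times S^2$), the inflation lemma of Lalonde--McDuff and Li--Liu allows one, along an embedded $J$-holomorphic curve $C$ with $C^2\ge 0$, to deform $\omega_0$ through symplectic forms so that its class travels to $[\omega_0]+t\,\mathrm{PD}[C]$ for every $t\ge 0$. Taubes' theorem again supplies the curves needed: a class with non-negative square that pairs positively with $-K$ and with all exceptional classes is Gromov-effective. I would then use the action of the diffeomorphism group --- reflections in $(-1)$- and $(-2)$-classes, i.e. Cremona-type transformations --- to move an arbitrary admissible $\Omega$ into a reduced fundamental chamber, realize that reduced class by inflation from $\omega_0$, and transport the resulting form back by the diffeomorphism. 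The positivity inequalities are precisely what place $\Omega$ in the cone reachable this way.

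The uniqueness statement follows from the deformation rigidity of symplectic structures on rational surfaces. Li--Liu proved that the symplectic canonical class of a rational surface is unique up to orientation-preserving diffeomorphism and that the symplectic cone attached to a fixed $K$ is connected. Thus two symplectic forms $\omega_1,\omega_2$ representing the same $\Omega$ have diffeomorphic canonical classes; after applying such a diffeomorphism they may be joined by a path of cohomologous symplectic forms, and Moser's theorem upgrades this path to a diffeomorphism carrying $\omega_1$ to $\omega_2$.

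The main obstacle is the sufficiency half of the first bullet: producing a symplectic form in every admissible class. This is where the analytic content lives --- Taubes' existence of embedded pseudoholomorphic curves in the right homology classes and the quantitative inflation construction --- and one must check that the combinatorics of the positivity inequalities together with the diffeomorphism-group action sweep out exactly the open cone $\{\Omega^2>0,\ \Omega(E)>0\ \text{for all exceptional } E\}$ and no more. By contrast, necessity and uniqueness are comparatively direct once Taubes' curves are available, following from positivity of area and Moser's argument respectively.
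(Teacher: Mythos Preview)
The paper does not prove this theorem; it is quoted with a citation to \cite{li2008space} and used as a black box, so there is no paper proof to compare your attempt against. Your outline is the standard one in the literature (Taubes' $\mathrm{SW}=\mathrm{Gr}$ in the $b^+=1$ regime for the existence of symplectic exceptional spheres, inflation \`a la Lalonde--McDuff and Li--Liu for sufficiency, and Li--Liu's uniqueness of the symplectic canonical class for the second bullet), and is correct in spirit.

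Two steps deserve tightening if you want this to stand as an actual proof. In the sufficiency direction, inflation along classes of non-negative square does not by itself reach every admissible $\Omega$: to move the exceptional periods independently you also need to inflate along the exceptional spheres themselves (``negative inflation''), or equivalently blow down and re-blow up with different weights; this is where the combinatorial reduction to a fundamental chamber really earns its keep. In the uniqueness argument, connectedness of the symplectic cone for a fixed $K$ gives a path of symplectic forms whose cohomology classes vary, not a path of \emph{cohomologous} forms, and the diffeomorphism you invoke to align the canonical classes will in general move $\Omega$ as well. What is actually needed is the stronger fact that, on a rational surface, the space of symplectic forms in a fixed cohomology class (with fixed $K$) is path-connected; only then does Moser apply. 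This is a genuine additional input (McDuff's deformation-to-isotopy argument, or the full Li--Liu package), not a consequence of cone connectedness.
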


\begin{remark}\label{basicremark}
Following the second statement of the above theorem we will often identify a symplectic form $\omega$ with its cohomology class $\Omega=[\omega]$. Such cohomology class is determined by its periods. For $X_k$, we follow \cite{SS}  and denote the periods by 

\[h=\int_H\omega,\quad \mu_i=\int_{E_i}\omega.\]
\end{remark}

The last property of symplectic rational manifolds we want to use is that they are, as a collection of symplectic manifolds, closed under the symplectic rational blow-up. This was first proved in the $L_{2,1}$ case in \cite{BLW} and in the general $L_{n,1}$ case by Park-Shin in \cite{ParkShin}.

\begin{Theorem}\emph{(Closedness under rational blow-up)}\label{ratblowuponratman}
Let $(X,\omega)$ be a rational symplectic manifold and $L$ a Lagrangian pinwheel. After rationally blowing-up $X$ along $L$, the resulting symplectic manifold $(\widetilde{X},\tilde{\omega})$ is again rational.
\end{Theorem}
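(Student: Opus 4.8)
The plan is to locate $(\widetilde X,\tilde\omega)$ in the classification of symplectic $4$-manifolds by its Kodaira dimension. Recall (Li) that the symplectic Kodaira dimension $\kappa$ is a diffeomorphism invariant, determined on a minimal model by the signs of $K^2$ and $K\cdot[\omega]$, and that $\kappa=-\infty$ holds precisely for the rational and ruled symplectic manifolds. Since a rational $X$ has $\kappa(X)=-\infty$, the whole theorem reduces to two assertions about $\widetilde X$: that its first Betti number vanishes, and that $\kappa(\widetilde X)=-\infty$. The first assertion upgrades ``rational or ruled'' to ``rational'', because the irrational ruled surfaces are exactly those ruled over a base of positive genus $g$, and these have $b_1=2g>0$.

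The Betti number bookkeeping is the routine part, and I would dispatch it first with Mayer--Vietoris, exactly as in the computation quoted in Section~\ref{ratblowupsec}. Write $W=X\setminus\nu L=\widetilde X\setminus\nu\mathcal S_n$ for the common piece. Because $B_{n,1}$ is a rational homology ball and the plumbing $\nu\mathcal S_n$ is simply connected with rational homology concentrated in degree two, Mayer--Vietoris over the separating lens space $L(n^2,n-1)$ gives $b_1(\widetilde X)=b_1(X)=0$ together with the increase in $b_2$ already recorded. Moreover the intersection form on the classes of $\mathcal S_n$ is that of the minimal resolution of the cyclic quotient singularity $\tfrac{1}{n^2}(1,n-1)$, hence negative definite, so $b_2^+(\widetilde X)=b_2^+(X)=1$. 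Thus $\widetilde X$ is a symplectic $4$-manifold with $b_2^+=1$ and $b_1=0$, and it remains only to pin down its Kodaira dimension.

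For the Kodaira dimension I would argue by contradiction. The essential difficulty, and the reason the statement is not immediate, is that the rational blow-up is \emph{not} an ordinary (point) blow-up, so one cannot simply invoke the invariance of $\kappa$ under blowing up. Instead, suppose $\kappa(\widetilde X)\geq 0$. Then the minimal model of $(\widetilde X,\tilde\omega)$ carries a nonzero Seiberg--Witten invariant detecting its canonical class (Taubes, in the appropriate chamber for $b_2^+=1$), and $\pm K_{\min}$ is a basic class pairing non-negatively with $[\tilde\omega]$. The plan is to transport this basic class backwards through the rational blow-down $\widetilde X\rightsquigarrow X$, which replaces $\nu\mathcal S_n$ by $B_{n,1}$ across $L(n^2,n-1)$: using the gluing/excision behaviour of Seiberg--Witten invariants for rational blow-downs (in the spirit of the Fintushel--Stern blow-down formula) one produces a corresponding nonzero invariant on $X$. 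This contradicts the rationality of $X$, since a rational surface has $b_2^+=1$ and $\kappa=-\infty$ and therefore admits no such basic class. Hence $\kappa(\widetilde X)=-\infty$, and combined with $b_1(\widetilde X)=0$ the classification forces $\widetilde X$ to be rational.

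The main obstacle is exactly this last transport step: making precise how a Seiberg--Witten basic class of $\widetilde X$ descends to $X$ under the surgery along $L(n^2,n-1)$, and checking that it is incompatible with $X$ being rational. Equivalently, one is proving the monotonicity statement that the rational blow-up cannot raise the Kodaira dimension out of $-\infty$. A more hands-on alternative would be to exhibit directly inside $W\subset\widetilde X$ a symplectically embedded sphere of non-negative self-intersection, descended from a rational or ruling curve of $X$, and then invoke McDuff's theorem that such a sphere forces $\widetilde X$ to be rational or ruled; there the obstacle is relocated to arranging such a curve to survive the excision of $\nu L$, which is why I would favour the Seiberg--Witten route.
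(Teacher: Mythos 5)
You are attempting to reprove a statement that the paper itself does not prove: Theorem \ref{ratblowuponratman} is quoted from the literature, with \cite{BLW} covering the $L_{2,1}$ case and \cite{ParkShin} the general case, so there is no in-paper argument to match yours against. On its own terms, your reduction is sound and well organized: the Mayer--Vietoris and Novikov-additivity bookkeeping correctly gives $b_1(\widetilde{X})=0$ and $b_2^+(\widetilde{X})=b_2^+(X)=1$, and Li's classification ($\kappa=-\infty$ iff rational or ruled, with irrational ruled surfaces detected by $b_1=2g>0$) correctly reduces everything to showing $\kappa(\widetilde{X})=-\infty$. But that last step --- the Seiberg--Witten transport --- is precisely where the proposal has a genuine gap, and it is more than the technicality you flag.

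Concretely, the transport argument fails as stated on two counts. First, chambers: since $b_2^+(\widetilde{X})=b_2^+(X)=1$, there is no chamber-free notion of basic class, and by Taubes \emph{every} symplectic $4$-manifold with $b^+=1$ --- rational surfaces included --- has $SW=\pm 1$ for the canonical spin-c structure in the Taubes chamber. Hence your endgame claim that ``a rational surface admits no such basic class'' is false in the only sense available here: even if you transport a nonzero chambered invariant from $\widetilde{X}$ to $X$, that alone does not contradict rationality. A contradiction requires controlling \emph{which} chamber the descended invariant lands in (for instance, the small-perturbation chamber, which vanishes on rational surfaces since they admit positive-scalar-curvature metrics), and the $b^+=1$ chambered version of the Fintushel--Stern blow-down formula demands exactly this bookkeeping, which is absent. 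Second, descent: the blow-down formula applies only to spin-c structures on $\widetilde{X}$ whose restriction to $L(n^2,n-1)$ extends over $B_{n,1}$, but your candidate class $\pm K_{\min}$ lives on the minimal model of $\widetilde{X}$, where the configuration $\mathcal{S}_n$ need not survive (the exceptional spheres being blown down may intersect it); you neither work on $\widetilde{X}$ itself nor verify the extension condition for the relevant spin-c structure. Note finally that any such argument must use the blow-\emph{up} direction in an essential way: as the paper's remark following Theorem \ref{ratblowuponratman} recalls via \cite{Dorfm}, rational blow-\emph{down} does not preserve rationality, so a symmetric-looking transport principle cannot be correct without the asymmetries above being made precise.
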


\begin{remark}
This result is certainly false for the case of the symplectic rational blow-down. Remark 3.1 in \cite{BLW} explains an example found in \cite{Dorfm}, where an explicit symplectic rational blow-down in performed on a symplectic rational manifold and the resulting symplectic manifold is \textbf{not} rational.
\end{remark}

The usefulness of Theorem \ref{ratblowuponratman} comes from the fact that, given a symplectic rational manifold $(X,\omega)$ and a Lagrangian pinwheel $L_n$, we can completely determine the diffeomorphism type of the blown-up manifold $\widetilde{X}$ since, up to diffeomorphism, rational manifolds are determined by their second Betti number. The only exceptions to this are the rational manifolds $S^2\times S^2$ and $X_1$ but in the cases we examine, that ambiguity is easily resolved.

Here are some examples that will show up in the rest of the paper.

\begin{exam}

\begin{enumerate}
\hfill

\item The most basic rational surface is $(\mathbb{C}P^2,\omega_{FS})$. Since $H_2(\mathbb{C}P^2,\mathbb{Z})=\mathbb{Z}$, we get that any symplectic form on $\mathbb{C}P^2$ is diffeomorphic to a scalar multiple of the Fubini-Study form. Here, given an orientation, the only inequality the period of a cohomology class needs to satisfy to carry a symplectic form is $h>0$.
   
\item For $X_k$ with $0\leq k\leq 8$, we can completely determine the homology classes that carry exceptional spheres. In the standard basis, these classes are
         
         \begin{enumerate}
            \item $E_i-E_j$,
            \item $H-E_i-E_j$,
            \item $2H-\sum_{i=1}^5 E_i$, where the $E_i$ are $5$ exceptional spheres, part of a basis,
            \item $3H-2E_j-\sum_{i=1}^6 E_i$ where the $E_i$ and $E_j$ are $7$ exceptional spheres, part of a basis.
        \end{enumerate}
    Since these are a finite number of classes, we can completely determine the inequalities that the periods of a cohomology class must satisfy in order to carry a symplectic form. For example, as was spelled out in \cite{SS}, a cohomology class of $X_3$ carries a symplectic form if and only, apart from having positive total volume, its periods satisfy $h,\mu_i>0$ and $h-\mu_i-\mu_j>0$ for $i,j=1,2,3$.

 \item The rational manifolds $X_k$ with $0\leq k \leq 8$ have (up to scaling or symplectomorphism) a distinguished symplectic form that makes them monotone. The periods of these forms satisfy $\mu_i=\frac{h}{3}$. Additionally, they are compatible with the complex structure that makes the $X_k$ a del Pezzo surface, thus their significance. We will denote such a symplectic rational manifold with $\D_k$, often omitting the symplectic form.
   
 \item Much related to $X_1$ is another simple symplectic manifold $(S^2\times S^2,\omega_\lambda)$ where $\omega_\lambda$ is the symplectic form which has area $\lambda$ on $A=[S^2\times \star]$ and area $1$ on $B=[\star\times S^2]$. $X_1$ and $S^2\times S^2$ have the same cohomology groups but with different ring structures, since $A^2=0=B^2$ and $A\cdot B=1$. It should be noted that rationally blowing-up a Lagrangian $\mathbb{R}P^2$ in $(\mathbb{C}P^2,\omega_{FS})$ transforms it to $(S^2\times S^2,\omega_\lambda)$ where the Lagrangian $\mathbb{R}P^2$ is replaced by a $(-4)$-symplectic sphere.

\end{enumerate}   
\end{exam}

Following Example 4 above, we want to determine what happens when we blow-up two disjoint Lagrangian $\R P^2$ in $X_3$, as we will need this information later. This calculation is slightly more elaborate so we prove the following lemma in some detail:

\begin{lemma}\label{existbasislemma}
Let $L_1$ and $L_2$ be two disjoint Lagrangian $\R P^2$'s in $(X_3,\omega)$, lying the homology classes $H$ and $E_1+E_2+E_3$ respectively. Performing symplectic rational blow-up on both of them results to the symplectic rational manifold $(X_5,\tilde{\omega})$. In addition, there exists a basis $\{Z_\infty,F,\widetilde{E}_0,\widetilde{E}_1,\widetilde{E}_2,\widetilde{E_3}\}$ for $H_2(X_5,\Z)$ where the intersection form becomes 

\[
\begin{pmatrix}
  \begin{matrix}
  -4 & 1 \\
  1 & 0
  \end{matrix}
  & \rvline & \bigzero \\
\hline
  \bigzero & \rvline &
  \begin{matrix}
  -1 & 0 & 0 & 0 \\
  0 & -1 & 0 & 0 \\
  0 & 0 & -1 & 0 \\
  0 & 0 & 0 & -1
  \end{matrix}
\end{pmatrix}
.\]

Finally, the symplectic $(-4)$-spheres $S_1$ and $S_2$, replacing the $L_i$, represent, up to diffeomorphism, the homology classes $Z_\infty$ and $\widetilde{E}_0-\widetilde{E}_1-\widetilde{E}_2-\widetilde{E}_3$.
\end{lemma}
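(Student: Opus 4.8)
The plan is to first fix the diffeomorphism type of $\widetilde X$ and then build the advertised basis by splitting off the summand created by the first blow-up. For the diffeomorphism type I would invoke Theorem~\ref{ratblowuponratman}: since $X_3$ is rational, so is $\widetilde X$. Each rational blow-up of a Lagrangian $\R P^2$ replaces it by a single symplectic $(-4)$-sphere and hence raises $b_2$ by one (exactly as $\mathbb{C}P^2$ becomes $S^2\times S^2$ in Example~4), so $b_2(\widetilde X)=b_2(X_3)+2=6$. The only rational surface with $b_2=6$ is $X_5$ (the $S^2\times S^2$ versus $X_1$ ambiguity occurs only at $b_2=2$), which identifies $\widetilde X\cong X_5$.

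Next I would use that the two surgeries are supported in disjoint neighbourhoods of $L_1$ and $L_2$, so the resulting $(-4)$-spheres $S_1,S_2$ are disjoint: $S_1^2=S_2^2=-4$ and $S_1\cdot S_2=0$. Set $Z_\infty:=[S_1]$. Blowing up $L_1$, which lies in class $H$, locally reproduces $\mathbb{C}P^2\rightsquigarrow S^2\times S^2$ from Example~4, creating an $S^2$-bundle structure in which $S_1$ is the $(-4)$-section; this provides a fiber class $F$ with $F^2=0$ and $F\cdot Z_\infty=1$. Choosing the fiber away from $\nu(L_2)$, the class $F$ survives into $X_5$ with $F\cdot[S_2]=0$, and also $Z_\infty\cdot[S_2]=S_1\cdot S_2=0$. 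The sublattice $\langle Z_\infty,F\rangle$ carries the unimodular form $\left(\begin{smallmatrix}-4&1\\1&0\end{smallmatrix}\right)$, hence splits off an orthogonal summand $H_2(X_5;\Z)=\langle Z_\infty,F\rangle\perp\Lambda$. A signature count ($X_5$ has type $(1,5)$ and the block has type $(1,1)$) shows $\Lambda$ is rank-$4$, unimodular and negative definite, so $\Lambda\cong\langle-1\rangle^{\oplus4}$ by the classification of definite unimodular forms; and $[S_2]\in\Lambda$ with $[S_2]^2=-4$.

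It remains to pin down $[S_2]$. Writing $[S_2]=\sum a_i\widetilde E_i$ in an orthonormal basis of $\Lambda$, the equation $\sum a_i^2=4$ leaves only the types $(2,0,0,0)$ and $(1,1,1,1)$, so I must show $[S_2]$ is primitive. Here I would argue by positivity of intersections: if $[S_2]=2v$ with $v^2=-1$, then $\omega\cdot v=\tfrac12\,\omega\cdot S_2>0$, and by adjunction together with this positivity $v$ is an exceptional class, so by Theorem~\ref{basicsymptoprational} it is represented by a symplectic $(-1)$-sphere; choosing an almost complex structure for which $S_2$ is holomorphic, the exceptional representative $V$ of $v$ is a distinct irreducible $J$-holomorphic curve with $S_2\cdot V=2v\cdot v=-2<0$, contradicting positivity of intersections. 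Thus $[S_2]$ is of type $(1,1,1,1)$, and after relabelling and adjusting signs of the orthonormal basis one gets $\{\widetilde E_0,\widetilde E_1,\widetilde E_2,\widetilde E_3\}$ with $[S_2]=\widetilde E_0-\widetilde E_1-\widetilde E_2-\widetilde E_3$. Together with $\langle Z_\infty,F\rangle$ this yields a $\Z$-basis with the stated block intersection matrix, and $S_1=Z_\infty$ by construction.

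I expect the main obstacle to be the homological bookkeeping of the rational blow-up, which — unlike an ordinary blow-up — is not transparent on $H_2$ (classes get mixed through the $\Z_4=H_1(L(4,1))$ on the gluing boundary). Concretely, the delicate points are producing the fiber $F$ and verifying that it persists into $X_5$ disjoint from $S_2$ with the right intersections, and establishing primitivity of $[S_2]$; the positivity-of-intersections argument above is what makes the latter clean, after which the splitting, the signature count and the final change of basis are routine linear algebra.
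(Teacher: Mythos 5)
Your overall architecture (identify $X_5$ by the $b_2$ count, split off a unimodular block $\langle Z_\infty,F\rangle$, classify the orthogonal complement as $\langle-1\rangle^{\oplus 4}$, then enumerate solutions of $\sum a_i^2=4$) is reasonable, but the step that produces $F$ has a genuine gap. You claim that blowing up $L_1$ ``creates an $S^2$-bundle structure in which $S_1$ is the $(-4)$-section''. That is false: the rational blow-up of $(X_3,L_1)$ is diffeomorphic to $X_4$, which is not a sphere bundle over $S^2$; the passage $\C P^2\rightsquigarrow S^2\times S^2$ in Example 4 is a \emph{global} statement about $\C P^2$, not a feature of the local surgery near $L_1$. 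Indeed, nothing local can give you a class meeting $S_1$ once: the fiber disc of $\nu S_1$ has boundary generating $H_1(L(4,1))\cong\Z_4$, so closing it up requires global topology of the complement. What you actually need is a class $F$ with $F^2=0$, $F\cdot[S_1]=1$, $F\cdot[S_2]=0$; primitivity of $[S_1]$ and unimodularity give only some $F_0$ with $F_0\cdot[S_1]=1$, and forcing $F^2=0$ exactly (which is what makes your rank-$2$ block unimodular, so that the orthogonal splitting goes through) means solving a quadratic equation in the lattice --- not routine bookkeeping. The paper sidesteps this entirely: it constructs the basis in a fixed model, namely the Hirzebruch surface with $(-4)$-curve $Z_\infty$ and fiber $F$, blown up at four points away from $Z_\infty$, and then invokes Proposition 3.2 of \cite{BLW} (transitivity of the diffeomorphism action on homology classes of such $(-4)$-spheres) to move $[S_1]$ onto $Z_\infty$; this is precisely why the lemma's conclusion is stated only ``up to diffeomorphism'', whereas your version implicitly promises the classes on the nose.

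Your primitivity argument also contains a Chern-class sign error. Adjunction for the embedded symplectic sphere $S_2$ gives $c_1([S_2])=[S_2]^2+2=-2$, so if $[S_2]=2v$ then $c_1(v)=-1$: $v$ is \emph{not} an exceptional class, Theorem \ref{basicsymptoprational} produces no symplectic representative in class $v$, and your contradiction $S_2\cdot V=-2<0$ evaporates. The class with $c_1=+1$ is $-v$, but a representative of $-v$ meets $S_2$ in $2v\cdot(-v)=+2$ points, so positivity of intersections says nothing. The correct repair uses area rather than intersections: $-v$ satisfies $(-v)^2=-1$ and $c_1(-v)=1$, hence is an exceptional class carrying a symplectic sphere of positive $\tilde\omega$-area, while $\tilde\omega(-v)=-\tfrac{1}{2}\tilde\omega(S_2)<0$, a contradiction. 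With that fix, your enumeration ruling out the type $(2,0,0,0)$ and landing on $\widetilde E_0-\widetilde E_1-\widetilde E_2-\widetilde E_3$ is sound, and it is in fact a useful addition, since the paper compresses exactly this point (excluding classes like $\pm 2\widetilde E_0$) into ``it is straightforward to check''.
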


\begin{proof}
Blowing-up the $L_i$ in $X_3$ yields a rational manifold with $b_2=6$ and, up to diffeomorphism, this manifold is $X_5$.\par

Let us construct the claimed basis; consider $S^2\times S^2$ as the underlying smooth manifold of the Hirzebruch surface with a $(-4)$-rational curve. Let $Z_\infty$ be this curve and $F$ one of fibers of the $P^1$-fibration. Consider the complex manifold obtained by blowing-up $4$ distinct points away from $Z_\infty$ and let the $\widetilde{E}_i$, for $0\leq i\leq 3$ be the exceptional divisors. The underlying smooth manifold is again $X_5$ and the rational curves $Z_\infty,F,\widetilde{E}_i$ provide the desired basis.\par 

We are left with identifying the homology classes of $S_1$ and $S_2$. Proposition 3.2 in \cite{BLW} shows that diffeomorphisms act transitively on the set representing such homology classes. Thus we may assume that $S_1$ is indeed in $Z_\infty$ and then it is straightforward to check that the only other possible class for $S_2$ is $\widetilde{E}_0-\widetilde{E}_1-\widetilde{E}_2-\widetilde{E}_3$ (up to permuting the $\widetilde{E}_i)$.

m
\end{proof}

\subsection{Homology classes of Lagrangian $\R P^2$'s.}
We establish here which homology classes can be represented by Lagrangian $\R P^2$'s. We will make heavy use of the fact that any smoothly embedded $(-2)$-sphere $S$ defines, up to smooth isotopy, a diffeomorphism which we call a \textit{Dehn twist along $S$}. Dehn twists along a sphere generalize the usual notion of a Dehn twist along a circle.\par

The key property of a Dehn twist $\phi$ along $S$ is that it acts on homology as a reflection along $[S]$, meaning that

\begin{equation}\label{homdehn}
 \phi_*(A)=A+(A\cdot [S])[S].   
\end{equation}

For more information on Dehn twists we refer the reader to \cite{BLW}. We note here that if $S$ is Lagrangian, the Dehn twist can be assumed to be a symplectomorphism of the ambient manifold.\par 

\begin{lemma}\label{existence1rp2}
Let $X_k$ be a rational manifold with $0\leq k\leq 8$. Then, the $\Z_2$-homology classes that carry a Lagrangian $\R P^2$ for \textit{some} symplectic form are exactly
\begin{enumerate}[i)]
    \item $H$ for $k\geq 0$,
    \item $E_i+E_j+E_k$ for $k\geq 3$,
    \item $H+E_i+E_j+E_k+E_l$ for $k\geq 4$,
    \item $\sum_{i=1}^7 E_i$ where $E_i$ are $7$ disjoint exceptional spheres belonging to a maximal collection, for $k\geq 7$,
    \item $H+\sum_{i=1}^{8} E_i$ where $E_i$ belong to a maximal collection of exceptional spheres, for $k=8$.
\end{enumerate}
\end{lemma}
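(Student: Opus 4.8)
The plan is to prove the two inclusions by passing through the rational blow-up, which turns a Lagrangian $\R P^2$ into a symplectic $(-4)$-sphere, and to organize the classes using the symmetry coming from Dehn twists. The key observation is that the property ``carries a Lagrangian $\R P^2$ for some symplectic form'' is invariant under $\mathrm{Diff}(X_k)$: if $\phi$ is a diffeomorphism and $L$ is an $\omega$-Lagrangian $\R P^2$, then $\phi(L)$ is again an embedded $\R P^2$ whose standard neighborhood is Lagrangian for $(\phi^{-1})^*\omega$, now in class $\phi_* A$. By \eqref{homdehn} the Dehn twists along embedded $(-2)$-spheres act on $H_2(X_k;\Z_2)$ as the reflections $A\mapsto A+(A\cdot[S])[S]$ in the classes $[S]$ with $[S]^2=-2$, and these generate the Weyl group of $X_k$. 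Hence the set of realizable $\Z_2$-classes is a union of Weyl-group orbits, and it suffices to treat one representative of each relevant orbit.

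For necessity, suppose $L$ is a Lagrangian $\R P^2$ in class $A$ for some $\omega$. Rationally blowing up $L$ produces, by Theorem \ref{ratblowuponratman} together with the Betti-number count, the manifold $X_{k+1}$ (or $S^2\times S^2$ when $k=0$), in which $L$ is replaced by a symplectic $(-4)$-sphere $S$; by adjunction $[S]^2=-4$ and $c_1\cdot[S]=-2$. I would then enumerate, up to the Weyl symmetry of the first step, the classes of $X_{k+1}$ carrying symplectic $(-4)$-spheres with $c_1\cdot[S]=-2$. For $k\le 8$ these fall into finitely many orbits, computed from the same Seiberg--Witten input as the exceptional classes of Example 2 (Theorem \ref{basicsymptoprational}). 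Finally, for each such $(-4)$-class I would blow the sphere back down and identify the $\Z_2$-class of the resulting core Lagrangian $\R P^2$ by a Mayer--Vietoris computation across the common boundary lens space $L(4,1)$; the answers are precisely the families i)--v).

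For realizability I would produce one Lagrangian $\R P^2$ in each orbit. The seed is the standard real locus $\R P^2\subset(\C P^2,\omega_{FS})$ in class $H$; blowing up symplectically at balls disjoint from it leaves it Lagrangian and fixes its class, giving family i) for all $k$. Family ii) lies in the same Weyl orbit: the Dehn twist along a $(-2)$-sphere in class $H-E_i-E_j-E_k$ sends $H$ to $2H-E_i-E_j-E_k\equiv E_i+E_j+E_k\pmod 2$. The remaining orbits, containing iii)--v) (which are genuinely distinct --- for instance, in $X_8$ the class $H+\sum_{i=1}^8 E_i$ equals $c_1$ modulo $2$ and is therefore characteristic, while the others are not), I would realize by reversing the necessity computation: choose a symplectic form on $X_{k+1}$ carrying a symplectic $(-4)$-sphere in the relevant class and blow it down. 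Here one must confirm that the blow-down returns to $X_k$ rather than to some non-rational manifold; this is checked using the classification of symplectic $4$-manifolds with $b^+=1$ and $K\cdot\omega<0$ as rational or ruled, after verifying that the blown-down manifold is simply connected with the expected $(b_2,\sigma)$.

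The principal obstacle is the homological dictionary common to both directions: given a $(-4)$-sphere class $[S]\in H_2(X_{k+1};\Z)$, one must determine the $\Z_2$-class $A\in H_2(X_k;\Z_2)$ of the Lagrangian $\R P^2$ obtained by blowing it down. Since the two manifolds carry unrelated bases and the $\R P^2$ possesses only a $\Z_2$-fundamental class, this correspondence is genuinely basis-dependent --- as is already visible in Lemma \ref{existbasislemma}, where $Z_\infty$ and $\widetilde E_0-\widetilde E_1-\widetilde E_2-\widetilde E_3$ are the two relevant $(-4)$-classes --- and it has to be tracked carefully through the Mayer--Vietoris sequence of $X=(X\setminus\nu L)\cup_{L(4,1)}B_{2,1}$. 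Establishing completeness of the $(-4)$-sphere enumeration in the del Pezzo range and matching its mod-$2$ reductions to the five families is the only step that requires real care; the rest is either bookkeeping or the explicit constructions above.
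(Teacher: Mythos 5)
Your route is genuinely different from the paper's, and the two directions fare differently. For necessity the paper uses a one-line cohomological obstruction: Audin's theorem, which says the $\Z_2$-class of a Lagrangian $\R P^2$ must be the mod-$2$ reduction of an integral class of square $1 \bmod 4$; a short lattice computation then yields exactly the five families. Your rational-blow-up enumeration of $(-4)$-sphere classes with $c_1\cdot[S]=-2$ in $X_{k+1}$, followed by a Mayer--Vietoris dictionary across $L(4,1)$, is workable in principle but replaces an elementary congruence with a substantial classification problem, and the step you yourself flag as delicate really is: the complement identification only gives you the sublattice $[S]^\perp$ embedded in $H_2(X_k;\Z)$, from which $[L]\bmod 2$ must be recovered as the unique $\Z_2$-class annihilating its image --- this has to be carried out orbit by orbit. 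On the realizability side, your seed-plus-Dehn-twist argument for families i) and ii) is exactly the paper's, and the same twists also handle iv) and iii) for $k\geq 5$, which lie in the mod-$2$ Weyl orbit of $H$. Note a slip in your orbit bookkeeping: iii) at $k=4$ is \emph{also} characteristic, since $3H-\sum_{i=1}^4 E_i\equiv H+E_1+E_2+E_3+E_4 \bmod 2$ in $X_4$, not just v) in $X_8$; these two cases are precisely the Weyl-invariant ones that no sequence of Dehn twists can reach from $H$, and they are where your argument and the paper's diverge.

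For those two characteristic classes you propose to rationally blow down a symplectic $(-4)$-sphere in $X_{k+1}$, and this is where the genuine gap sits. Rationality is \emph{not} preserved by the rational blow-down --- the paper explicitly recalls Dorfmeister's counterexample in the remark following Theorem \ref{ratblowuponratman} --- and your proposed verification, that the blown-down manifold is ``simply connected with the expected $(b_2,\sigma)$,'' only pins down the homeomorphism type; exotic manifolds homeomorphic to rational surfaces exist, so this does not identify the blow-down with $X_k$. The classification you invoke requires establishing $K\cdot\omega<0$ for the blown-down form, and verifying that inequality (which depends on choosing the $(-4)$-sphere with sufficiently small area, etc.) is the actual content, left unaddressed in your sketch. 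The paper sidesteps the problem entirely: for iii) at $k=4$ (resp.\ v) at $k=8$) it realizes the Lagrangian one blow-up higher, in $X_5$ (resp.\ $X_9$), where the class \emph{does} lie in the Weyl orbit of $H$, then invokes Corollary 3.3 of \cite{BLW} to find a symplectic exceptional $(-1)$-sphere in $E_5$ (resp.\ $E_9$) disjoint from the Lagrangian, and performs an ordinary blow-down --- a surgery that trivially preserves rationality. If you replace your rational blow-down step with this disjoint-$(-1)$-sphere trick, your proof closes; as written, the $k=4$ and $k=8$ cases are not established.
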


The proof of the above lemma can be inferred from Proposition \ref{existence1rp2} in \cite{DHL}. For the convenience of the reader we also provide a more straightforward proof. 

\begin{proof}
The standard cohomological constraint proven by Audin in \cite{AudinLag} shows that a $\Z_2$-homology class of $X_k$ can carry a Lagrangian $\mathbb{R}P^2$ if and only if it a $\Z_2$ reduction of a $\Z$-homology class with self intersection $1\mod 4$. It is straightforward to check that only the above classes satisfy this constraint.\par 
The fact that these classes indeed carry a Lagrangian $\mathbb{R}P^2$ for some symplectic form essentially follows from the fact that for each of these classes there exists a diffeomorphism of $X_k$ mapping $H$ to the corresponding class. Indeed:
\begin{enumerate}[i)]
    \item $\C P^2$ carries a Lagrangian $\mathbb{R}P^2$, say $L$, in the class $H$ for all symplectic forms $\omega$. For sufficiently small radii, one can symplectically embed $k$ balls in the complement of $L$ and then blow them up. Therefore $L$ exists also in $(X_k,\tilde{\omega})$, where $\tilde{\omega}$ is the appropriate symplectic form on the blow-up.
    
    \item $H$ can be mapped to $E_i+E_j+E_k$ via a Dehn twist along a $(-2)$-sphere in the class $H-E_i-E_j-E_k$.
   
    \item Assume $k\geq 5$. The class $H+E_i+E_j+E_k+E_l$ is mapped to $E_k+E_l+E_{l'}$ where $l'\neq k,l$ via a Dehn twist along a $(-2)$ sphere in $H-E_i-E_j-E_{l'}$ and this further can be mapped to $H$. For $k=4$, consider a Lagrangian $\mathbb{R}P^2$ in the class $H+E_1+E_2+E_3+E_4$ in $(X_5,\omega)$. By Corollary 3.3 in \cite{BLW}, there exists a symplectic exceptional sphere in $E_5$, disjoint from the Lagrangian $\R P^2$. Blowing down this sphere gives a Lagrangian $\mathbb{R}P^2$ in $X_4$ in the class $H+E_1+E_2+E_3+E_4$.
    
    \item Consider three different summands, say $E_1,E_2,E_3$, of $\sum_{i=1}^7 E_i$. Dehn twisting along a $(-2)$-sphere in $H-E_1-E_2-E_3$ maps the class $\sum_{i=1}^7 E_i$ to $H+E_4+E_5+E_6+E_7$ and this can be further mapped to $H$.

    \item Consider $H+\sum_{i=1}^8 E_i$ in $X_9$. Dehn twisting along a $(-2)$-sphere in $H-E_1-E_2-E_{9}$ maps the class $H+\sum_{i=1}^8 E_i$ to $E_3+E_4+\cdots + E_9$ which can be mapped to $H$. Therefore there exists some $\tilde{\omega}$ such that $(X_9,\tilde{\omega})$ carries a Lagrangian $\R P^2$ in $H+\sum_{i=1}^8 E_i$. By Corollary 3.3 in \cite{BLW} one can find an exceptional sphere in $E_9$ disjoint from the Lagrangian $\R P^2$ and blowing down that sphere produced the desired Lagrangian also for $k=8$.
\end{enumerate}
\end{proof}

It is straightforward to also distinguish pairs of homology classes carrying homologically disjoint Lagrangian $\R P^2$'s. 

\begin{lemma}\label{existence2rp2}
Let $L_1,L_2$ be two disjoint Lagrangians in $(X_k,\omega)$ for $3\leq k\leq 8$. Up to diffeomorphism (and swapping the two Lagrangians),  $L_1$ and $L_2$ belong to the classes
\begin{itemize}
    \item $H$ and $E_1+E_2+E_3$,
    \item $H$ and $E_1+E_2+E_3+E_4+E_5+E_6+E_7$.
\end{itemize}
\end{lemma}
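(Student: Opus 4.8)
The plan is to convert the geometric hypothesis of disjointness into a $\Z_2$-homological constraint and then reduce everything, up to diffeomorphism, against the list of admissible classes provided by Lemma \ref{existence1rp2}. Since $L_1,L_2$ are non-orientable they carry only $\Z_2$-fundamental classes, and disjointness forces
\[[L_1]\cdot[L_2]=0\in\Z_2.\]
In the standard basis the mod-$2$ intersection form is diagonal, with $H\cdot H=1$, $E_i\cdot E_j=\delta_{ij}$ and $H\cdot E_i=0$, so every pairing among the five families of Lemma \ref{existence1rp2} is read off immediately; in particular each admissible class has $\Z_2$-self-intersection $1$, as dictated by Audin's constraint.

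First I would isolate the two \emph{top} classes, namely $H+E_i+E_j+E_k+E_l$ in the boundary case $k=4$ and $H+\sum_{i=1}^{8}E_i$ in $X_8$, and observe that each of them pairs to $1$ with \emph{every} admissible class (including itself). Hence neither can belong to a disjoint pair, and both may be discarded. Every remaining admissible class — $H$, the $E_i+E_j+E_k$, the $\sum_{i=1}^{7}E_i$, and $H+E_i+E_j+E_k+E_l$ for $k\geq 5$ — is carried to $H$ by a composition of Dehn twists, exactly as exhibited in the proof of Lemma \ref{existence1rp2}, and these are honest self-diffeomorphisms of $X_k$. I would then normalize: choosing a self-diffeomorphism $\psi$ with $\psi_*[L_1]=H$ turns the pair into $(H,\psi_*[L_2])$, where $\psi_*[L_2]$ is still admissible (Audin's mod-$4$ condition is a diffeomorphism invariant) and still satisfies $H\cdot\psi_*[L_2]=[L_1]\cdot[L_2]=0$. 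Scanning the list, the only admissible classes orthogonal to $H$ over $\Z_2$ are the $E_i+E_j+E_k$ and the $\sum_{i=1}^{7}E_i$ (the classes $H$, $H+E_i+E_j+E_k+E_l$ and $H+\sum E_i$ all pair to $1$ with $H$). A permutation of the exceptional classes, which fixes $H$, then puts $\psi_*[L_2]$ into one of the standard forms $E_1+E_2+E_3$ or $E_1+\cdots+E_7$, producing exactly the two pairs claimed.

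The step demanding the most care is the dichotomy above: one must resist asserting that \emph{every} admissible class is diffeomorphic to $H$, since this fails precisely for the two top classes. For $H+\sum_{i=1}^{8}E_i$ one finds an integral lift of self-intersection $1$ but with the wrong pairing against $K_{X_8}$, so it lies in a different orbit of $\mathrm{Diff}(X_8)$; for $H+E_1+E_2+E_3+E_4$ in $X_4$ the obstruction already appears at the level of self-intersection modulo $8$. The reason the classification nevertheless stays clean is exactly that these exceptional classes are so large that they meet every admissible class, and so can never occur in a disjoint configuration in the first place; once they are removed, the surviving classes form a single diffeomorphism orbit and the normalization to $H$ goes through without obstruction.
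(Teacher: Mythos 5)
Your argument is correct and takes essentially the same route as the paper's proof: both discard $H+E_1+E_2+E_3+E_4$ (in $X_4$) and $H+\sum_{i=1}^{8}E_i$ (in $X_8$) because they pair to $1$ mod $2$ with every admissible class, then use the Dehn-twist diffeomorphisms from the proof of Lemma \ref{existence1rp2} to normalize $[L_1]=H$ and read off the two admissible classes orthogonal to $H$ as the only options for $[L_2]$. One small caveat on your closing aside: the justifications you give for why the two top classes lie outside the $\mathrm{Diff}$-orbit of $H$ are shaky (self-intersection mod $8$ is not a well-defined invariant of a $\Z_2$-class, since the lifts $H$ and $H+2E_1$ have squares $1$ and $-3$; the clean reason is that both discarded classes are the mod-$2$ reductions of $w_2$, hence characteristic, whereas $H$ is not), but this is inessential, as your pairing computation already eliminates them from any disjoint configuration.
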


\begin{proof}
The classes $H+E_1+E_2+E_3+E_4$ in $X_4$ and  $H+\sum_{i=1}^8 E_i$ in $X_8$ pair non-trivially with any other class possibly carrying a Lagrangian $\R P^2$ so we will not consider them.\par 

In the proof of Lemma \ref{existence1rp2} we showed that any other class can  be mapped to $H$ via a composition of Cremona transforms. Thus we can assume that $L_1$, after a diffeomorphism, represents the class $H$. Then, the only options for $L_2$, up to a diffeomorphism fixing $H$,  are $E_1+E_2+E_3$ and $E_1+E_2+E_3+E_4+E_5+E_6+E_7$.
\end{proof}

\section{Lagrangian projective planes in symplectic rational manifolds:}

\subsection{In $X_3$.}
In \cite{SS}, Shevchishin-Smirnov determine completely which symplectic forms on $X_3$ carry a Lagrangian $\mathbb{R}P^2$ in the homology class $E_1+E_2+E_3\in H_2(X_3,\mathbb{Z}_2)$. We will give a sketch for the proof of their theorem, since our proof will follow theirs in logic.

\begin{Theorem}\emph{(\cite{SS})}\label{thrmss}
The symplectic rational manifold $(X_3,\omega)$ carries a Lagrangian $\mathbb{R}P^2$ in the homology class $E_1+E_2+E_3$ if and only if 
\[\mu_i+\mu_j<\mu_k\]
\end{Theorem}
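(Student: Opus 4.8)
The plan is to use the symplectic rational blow-up as the main tool, exploiting Theorem \ref{ratblowuponratman} (closedness under rational blow-up) together with the classification of symplectic forms on rational surfaces from Theorem \ref{basicsymptoprational}. Suppose $(X_3,\omega)$ carries a Lagrangian $\mathbb{R}P^2$ (i.e.\ an $L_{2,1}$) in the class $E_1+E_2+E_3$. Rationally blowing up this Lagrangian replaces it with a symplectic $(-4)$-sphere $S$ and, since $b_2$ increases by $n=2$, produces a rational symplectic $4$-manifold $\widetilde{X}$ with $b_2=6$, which up to diffeomorphism is $X_4$. First I would fix a convenient basis $\{H,E_1,\dots,E_4\}$ of $H_2(X_4,\mathbb{Z})$ adapted to the surgery, in which the new $(-4)$-sphere $S$ has an explicit homology class; the natural candidate, consistent with the $L_{2,1}$-blow-up replacing $E_1+E_2+E_3$, is a class such as $H-E_1-E_2-E_3-E_4$ (self-intersection $1-4=-3$ — one must instead pick a genuine $(-4)$-class, e.g.\ an appropriate $2H-\sum E_i$ combination), so pinning down the correct class of $S$ and the resulting intersection data is the first careful bookkeeping step.

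Next I would translate the existence of the Lagrangian $\mathbb{R}P^2$ into a cohomological inequality on the periods of the blown-up form $\tilde\omega$. The symplectomorphism between $(X_3-\nu L,\omega)$ and $(\widetilde X-\nu S,\tilde\omega)$ records how the periods of $\tilde\omega$ relate to those of $\omega$; in particular the area of the core pinwheel and the area of the replacement sphere $S$ are linked, so I would compute $\int_S\tilde\omega$ in terms of $h,\mu_1,\mu_2,\mu_3$. The crucial constraint comes from the first bullet of Theorem \ref{basicsymptoprational}: $\tilde\omega$ exists as a genuine symplectic form on $X_4$ if and only if it pairs positively with every class carrying a smooth $(-1)$-sphere, where the exceptional classes are exactly those listed in the Examples ($E_i-E_j$, $H-E_i-E_j$, etc.). I would write out these finitely many positivity inequalities and, after substituting the period relations, distill them down to the single asymmetric inequality $\mu_i+\mu_j<\mu_k$.

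For the converse, I would run the argument in reverse: assuming $\mu_i+\mu_j<\mu_k$, I would produce a symplectic form on $X_4$ (equivalently $X_3$ blown up) containing a symplectic $(-4)$-sphere $S$ in the identified class, all of whose relevant period inequalities hold, and then rationally blow \emph{down} $S$ to recover an $L_{2,1}$ in the class $E_1+E_2+E_3$ on $(X_3,\omega)$. Here I would invoke the first bullet of Theorem \ref{basicsymptoprational} again to guarantee that the $(-4)$-class is represented by an \emph{embedded symplectic} sphere once the positivity conditions are met, which is exactly what is needed to perform the rational blow-down and obtain the Lagrangian.

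The main obstacle I expect is the precise bookkeeping of the homology class of $S$ and the period translation across the surgery: one must correctly match the smooth identification of $\widetilde X$ with $X_4$ to the symplectic data, ensure the $(-4)$-sphere's class genuinely has self-intersection $-4$ in the chosen basis, and verify that the full list of $(-1)$-sphere positivity conditions collapses to the clean statement $\mu_i+\mu_j<\mu_k$ rather than a larger system. A secondary subtlety is the uniqueness/diffeomorphism ambiguity flagged after Theorem \ref{ratblowuponratman} (the $S^2\times S^2$ versus $X_1$ issue), though for $b_2=6$ the target is unambiguously $X_4$, so this should not cause trouble here.
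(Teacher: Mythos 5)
Your necessity direction follows the paper's strategy correctly in outline: blow up the Lagrangian $\mathbb{R}P^2$, identify the blown-up manifold, pin down the class of the $(-4)$-sphere $S$, translate periods across the surgery, and impose positivity on exceptional classes via Theorem \ref{basicsymptoprational}. Two bookkeeping corrections: blowing up a single $L_{2,1}$ adds exactly one $(-4)$-sphere, so $b_2$ increases by $1$, not $2$ --- your ``$b_2=6$'' is inconsistent with your (correct) conclusion that $\widetilde{X}\cong X_4$, which has $b_2=5$ (compare the paper's Example 4, where blowing up an $\mathbb{R}P^2$ in $\mathbb{C}P^2$ gives $S^2\times S^2$). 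And the class of $S$ is neither $H-\sum E_i$ nor any $2H-\sum E_i$ combination: adjunction for a genus-zero sphere forces $S^2=-4$ and $c_1(S)=-2$, which in the standard basis $\{H,\widetilde{E}_0,\dots,\widetilde{E}_3\}$ singles out $\widetilde{E}_0-\widetilde{E}_1-\widetilde{E}_2-\widetilde{E}_3$; the period relations then come from the lattice identification $E_i-E_j\leftrightarrow \widetilde{E}_i-\widetilde{E}_j$, $2E_i\leftrightarrow \widetilde{E}_0-\widetilde{E}_i+\widetilde{E}_j+\widetilde{E}_k$, with $\tilde{\omega}(S)=4\epsilon$ a free positive parameter rather than a quantity determined by $h,\mu_i$.

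The genuine gap is in your converse. You invoke the first bullet of Theorem \ref{basicsymptoprational} ``to guarantee that the $(-4)$-class is represented by an embedded symplectic sphere once the positivity conditions are met,'' but that theorem only concerns $(-1)$-spheres: it says a class positive on all exceptional classes carries a symplectic form, and that exceptional classes carry symplectic $(-1)$-spheres. It gives no existence statement whatsoever for embedded symplectic representatives of a $(-4)$-class, and positivity of $\tilde{\omega}$ on that class is far from sufficient. This is precisely the step the paper handles with a Nakai--Moishezon argument: one fixes a specific complex structure on $X_4$ (blow up a point of $\mathbb{C}P^2$, then three points on the exceptional divisor) that contains a holomorphic $(-4)$-rational curve, checks that the prescribed periods satisfy the Nakai--Moishezon criterion, and obtains a K\"ahler form for which the holomorphic $(-4)$-curve is automatically a symplectic sphere; only then can one rationally blow it down. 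One also needs the final uniqueness step, which you elide: the blow-down produces \emph{some} form $\omega'$ on $X_3$ with the right periods carrying the Lagrangian, and the second bullet of Theorem \ref{basicsymptoprational} (cohomologous symplectic forms are diffeomorphic) transfers the Lagrangian to the given $\omega$. Without the K\"ahler construction (or an alternative such as Evans's visible Lagrangians, mentioned in the paper's remark), your sufficiency argument does not go through.
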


\begin{proof}\emph{(sketch)}
Let us start with the necessity of the inequality. By the rational blow-up/blow-down procedure, the existence of a Lagrangian $\mathbb{R}P^2$ in $(X_3,\omega)$ is equivalent to the existence of a symplectic $(-4)$-sphere $S$ in $(X_4,\widetilde{\omega})$.\par

Consider the standard basis $\{H,\widetilde{E}_0,\widetilde{E}_1,\widetilde{E}_2,\widetilde{E}_3\}$ of $H_2(X_4,\mathbb{Z})$ where the $\tilde{E}_i$ are represented by disjoint $(-1)$-spheres. Adjunction shows that $S$ represents the homology class $\widetilde{E}_0-(\widetilde{E}_1+\widetilde{E}_2+\widetilde{E}_3)$. Moreover, the (symplectic) identification of $X_3-\nu L$ with $X_4-\nu S$ carries over to the following homological identification:

\begin{equation}\label{basiceqss}
  E_i-E_j \leftrightarrow \widetilde{E}_i-\widetilde{E}_j,\quad 2E_i \leftrightarrow \widetilde{E}_0-\widetilde{E}_i+\widetilde{E}_j+\widetilde{E}_k.  
\end{equation}

Using Equation (\ref{basiceqss}), we can relate the periods $\tilde{\mu}_k$ of $(X_4,\tilde{\omega})$ to the periods $\mu_i$ of $(X_3,\omega)$. Indeed, the area positivity $S$ implies that for some $\epsilon>0$ we have $\tilde{\omega}(S)=4\epsilon$. Then the $\tilde{\mu}_k$ become

\begin{equation}\label{periodeqss}
   \tilde{\mu}_0=\frac{\mu_1+\mu_2+\mu_3}{2}+\epsilon,\quad \tilde{\mu}_k=\frac{\mu_i+\mu_j-\mu_k}{2}-\epsilon \text{ for } \{i,j,k\}=\{1,2,3\}. 
\end{equation}

Since $\tilde{\omega}$ is symplectic, Theorem \ref{basicsymptoprational} implies that $\tilde{\mu}_k>0$ and thus the $\mu_i$ satisfy the triangle inequality. \par

For the sufficiency part, we need to construct a Lagrangian $\mathbb{R}P^2$ under the assumption that the triangle inequality holds for the $\mu_i$. This is done by a Nakai-Moishezon argument. Consider the complex manifold structure on $X_4$ obtained by blowing-up a point in $\C P^2$ and then blowing-up three more distinct points of the exceptional divisor. In particular, this complex structure caries a complex $(-4)$ rational curve. One checks that the cohomology class $\Omega\in H_2(X_4,\R  $ with periods $\tilde{\mu}_i$ as in Equation \ref{periodeqss} satisfies the conditions of the Nakai-Moishezon criterion, thus the cohomology class $\Omega$ is represented by some K\"ahler form $\widetilde\omega$. Blowing down the $(-4)$ complex (also symplectic) sphere, gives a symplectic form $\omega$ on $X_3$ with periods exactly the number $\mu_i$ which further carries a Lagrangian $\R P^2.$ Thus, by Theorem \ref{basicsymptoprational} any symplectic form have the $\mu_i$ as periods carries such a Lagrangian $\R P^2$.

\end{proof}

\begin{remark}
For the sufficiency part of the above proof, Evans (\cite{EKb}) provides an alternative argument using visible Lagrangians.
\end{remark}

\begin{corollary}\label{corss}
Let the symplectic rational manifold $(X_3,\omega')$ have periods $h',\mu'_i$ with respect to the standard basis for homology. Then $(X_3,\omega')$ carries a Lagrangian $\mathbb{R}P^2$ in the homology class $H$ if and only if 

\[\mu'_k<\frac{h'}{2}\]

for $k=1,2,3$.

\end{corollary}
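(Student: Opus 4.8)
The plan is to deduce this corollary directly from Theorem \ref{thrmss} by transporting everything through a single Cremona-type diffeomorphism that interchanges the two relevant homology classes. Following Lemma \ref{existence1rp2}(ii), let $\phi$ be the Dehn twist along a smooth $(-2)$-sphere $S$ in the class $H-E_1-E_2-E_3$, whose action on homology is the reflection $A\mapsto A+(A\cdot[S])[S]$ of Equation (\ref{homdehn}). A direct computation gives $\phi_*(H)=2H-E_1-E_2-E_3$ and $\phi_*(E_k)=H-E_i-E_j$ for $\{i,j,k\}=\{1,2,3\}$; in particular $\phi_*$ sends the $\Z_2$-class $H$ to the $\Z_2$-class $E_1+E_2+E_3$, which is exactly the class governed by Theorem \ref{thrmss}.

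First I would set up the correspondence of Lagrangians. Since $\phi$ is a diffeomorphism of $X_3$, it carries a Lagrangian $\R P^2$ for $\omega'$ in the class $H$ to a Lagrangian $\R P^2$ for the pulled-back form $\omega:=(\phi^{-1})^*\omega'$ in the class $\phi_*(H)\equiv E_1+E_2+E_3 \pmod 2$, and this operation is reversible. By Theorem \ref{basicsymptoprational}, whether a symplectic form carries such a Lagrangian depends only on its cohomology class, so $(X_3,\omega')$ carries a Lagrangian $\R P^2$ in $H$ if and only if $(X_3,\omega)$ carries one in $E_1+E_2+E_3$.

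Next I would compute the periods of $\omega$ in terms of those of $\omega'$. Using $\int_C(\phi^{-1})^*\omega'=\int_{\phi_*C}\omega'$ (the reflection is its own inverse on homology), one obtains $h=2h'-\mu'_1-\mu'_2-\mu'_3$ and $\mu_k=h'-\mu'_i-\mu'_j$ for $\{i,j,k\}=\{1,2,3\}$. Applying Theorem \ref{thrmss}, the form $\omega$ carries the desired Lagrangian exactly when the $\mu_i$ satisfy the triangle inequalities $\mu_i<\mu_j+\mu_k$. Substituting the expressions above, each inequality simplifies: for instance $\mu_2+\mu_3-\mu_1=h'-2\mu'_1$, and symmetrically $\mu_1+\mu_3-\mu_2=h'-2\mu'_2$ and $\mu_1+\mu_2-\mu_3=h'-2\mu'_3$. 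Hence the three triangle inequalities hold if and only if $\mu'_k<h'/2$ for $k=1,2,3$, which is precisely the claim.

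The steps are elementary once the framework is in place, so I expect no deep obstacle; the main point requiring care is the bookkeeping. I must track classes at the level of $\Z_2$-homology (the integral image $2H-E_1-E_2-E_3$ of $H$ equals $E_1+E_2+E_3$ only modulo $2$, but this is exactly the level at which the fundamental class of a Lagrangian $\R P^2$ lives), keep the direction of the pullback consistent, and note that positivity of the periods of $\omega$ is automatic since $\omega=(\phi^{-1})^*\omega'$ is a genuine symplectic form. A secondary check is that a diffeomorphism of $X_3$ realizing this reflection indeed exists, which is guaranteed by the Dehn twist construction recalled before Lemma \ref{existence1rp2}.
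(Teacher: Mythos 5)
Your proposal is correct and follows essentially the same route as the paper's own proof: you transport the problem through the Dehn twist along a $(-2)$-sphere in the class $H-E_1-E_2-E_3$, derive the same period transformation $h=2h'-\mu'_1-\mu'_2-\mu'_3$ and $\mu_k=h'-\mu'_i-\mu'_j$, and reduce to the triangle inequality of Theorem \ref{thrmss} via the identity $\mu_i+\mu_j-\mu_k=h'-2\mu'_k$. The only cosmetic difference is your explicit appeal to Theorem \ref{basicsymptoprational} to justify that existence of the Lagrangian depends only on the cohomology class, a point the paper leaves implicit by writing $\omega'=\phi^*(\phi_*\omega')$.
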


\begin{proof}
Let $\phi$ be a Dehn Twist along a $(-2)$-sphere in the homology class $H-E_1-E_2-E_3$ and write $\omega'=\phi^*(\phi_*\omega')$. By Equation \ref{homdehn}, the map $\phi_*$ maps the $\mathbb{Z}_2$-class $H$ to $E_1+E_2+E_3$. Therefore $(X_3,\omega')$ carries a Lagrangian $\mathbb{R}P^2$ in $H$ if and only if $(X_3,\phi_*\omega')$ carries a Lagrangian $\mathbb{R}P^2$ in $E_1+E_2+E_3$. 

Let $\mu_i',h'$ be the periods of $\omega'$ and $\mu_i,h$ the periods of $\phi_*\omega'$, corresponding to $E_i$ and $H$. Then

\[h=2h'-\mu_1'-\mu_2'-\mu_3',\quad \mu_k=h'-\mu'_i-\mu'_j\]

and we deduce that

\[\mu_i+\mu_j>\mu_k\Leftrightarrow \mu'_k<\frac{h'}{2}.\]

The proof concludes applying Theorem \ref{basiceqss} to $\phi_*\omega$. 

\end{proof}

\begin{remark}
It is interesting to notice that our proof of Corollary \ref{corss} shows that the Symplectic Triangle inequality of Shevchishin-Smirnov is essentially equivalent to the arguments of Borman-Li-Wu in Prop. 5.1 of \cite{BLW}. Nevertheless, the wonderful idea to examine embedding obstructions via directly blowing-up the Lagrangian $\R P^2$ is really a novelty of \cite{SS}.
\end{remark}

Having established necessary and sufficient conditions for the existence of Lagrangians in the classes $H$ and $E_1+E_2+E_3$, we now want to understand how the two Lagrangians, if they exist, intersect with each other. To do this, we are going to follow the same strategy as in Theorem \ref{basiceqss}, but with two disjoint Lagrangians instead of one.

For the sake of clarity we will split the proof of Theorem A in steps. Hopefully, this will also highlight the use of the rational blow-up.

We start by showing that as long as there exist \textit{some} disjoint Lagrangian $\R P^2$'s then \textit{any} two (in the right homology classes) can be made disjoint. The proof relies on a highly non-trivial result in \cite{LLWuniq}, concerning the transitivity of the action of Hamiltonian diffeomorphisms on homologous Lagrangian $\R P^2$'s.

\begin{lemma}\label{extoham}
Let $L_1$ and $L_2$ be two $\mathbb{R}P^2$'s in $(X_3,\omega)$. Then, $L_1$ and $L_2$ can be made disjoint by a Hamiltonian isotopy if and only if there exist two disjoint Lagrangians $L'_1$ and $L'_2$, homologous to $L_1$ and $L_2$.
\end{lemma}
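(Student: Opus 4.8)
The plan is to prove the two directions separately, with the nontrivial content living entirely in the ``if'' direction. The ``only if'' direction is essentially tautological: if a Hamiltonian isotopy $\psi_t$ makes $L_1$ and $L_2$ disjoint, then setting $L'_1 = \psi_1(L_1)$ and $L'_2 = L_2$ (or applying the isotopy to both) produces two disjoint Lagrangian $\R P^2$'s, and since a Hamiltonian isotopy is in particular a symplectomorphism isotopic to the identity, it acts trivially on $\Z_2$-homology, so $L'_i$ is homologous to $L_i$. I would state this in one or two lines.

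For the ``if'' direction, the key input is the transitivity result of Li--Li--Wu from \cite{LLWuniq}: the group of Hamiltonian diffeomorphisms of $(X_3,\omega)$ acts transitively on the set of Lagrangian $\R P^2$'s in a fixed $\Z_2$-homology class. First I would invoke this hypothesis to get disjoint representatives $L'_1, L'_2$ homologous to $L_1,L_2$ respectively. By transitivity applied in the class $[L_1]$, there is a Hamiltonian diffeomorphism $\phi_1$ with $\phi_1(L'_1) = L_1$. Setting $L''_2 := \phi_1(L'_2)$, the pair $(L_1, L''_2)$ is still disjoint (because $\phi_1$ is a diffeomorphism) and $L''_2$ is homologous to $L_2$. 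Now I would apply transitivity a second time, \emph{but relative to} $L_1$: I need a Hamiltonian isotopy carrying $L''_2$ to $L_2$ without ever touching $L_1$.

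The main obstacle is precisely this relative step --- ensuring the second isotopy can be taken to fix (or avoid) the first Lagrangian. A clean way to handle it is to excise a small neighborhood: since $L_1$ is a compact Lagrangian and $L''_2, L_2$ are both disjoint from $L_1$, one works in the open symplectic manifold $X_3 \setminus \nu L_1$, where $\nu L_1$ is a Weinstein neighborhood. The Hamiltonian isotopy realizing transitivity between $L''_2$ and $L_2$ can be generated by a Hamiltonian supported away from $\nu L_1$ --- either by appealing to a relative/ambient version of the transitivity statement, or by cutting off the generating Hamiltonian near $L_1$ using that the relevant isotopy can be localized in the complement. Composing, the total Hamiltonian isotopy sends $L''_2 \mapsto L_2$ while fixing $L_1$ setwise, so the final configuration is the disjoint pair $(L_1, L_2)$ up to Hamiltonian isotopy, as desired. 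I expect the delicate point the author must address is exactly whether \cite{LLWuniq} provides this localized/relative version directly or whether a cutoff argument is needed.
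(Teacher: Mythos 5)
Your ``only if'' direction and the first application of transitivity are fine, but the second half of your argument has a genuine gap, and the gap is self-inflicted: the relative step you flag as the main obstacle is not actually needed, and the tool you propose for it is not available. First, the logical slip: an isotopy carrying $L''_2$ to $L_2$ while fixing $L_1$ would end at the \emph{original} pair $(L_1,L_2)$, which is exactly the possibly-intersecting configuration you started with; what you want is the reverse direction, a Hamiltonian diffeomorphism $\psi$ with $\psi(L_2)=L''_2$. And once you phrase it that way, no relative condition is required at all: ``made disjoint by a Hamiltonian isotopy'' concerns only the time-$1$ images, so since $L''_2=\phi_1(L'_2)$ is disjoint from $L_1$ and homologous to $L_2$, one more \emph{absolute} application of the Li--Li--Wu transitivity (Corollary 1.5 of \cite{LLWuniq}) produces $\psi$ with $\psi(L_2)=L''_2$, and $\psi(L_2)\cap L_1=\emptyset$ finishes the proof. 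There is nothing to localize.

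Second, even if one wanted the relative statement, your proposed route would fail as written. Corollary 1.5 of \cite{LLWuniq} is a theorem about the closed rational manifold $(X_3,\omega)$; there is no compactly supported or relative version in the complement $X_3\setminus \nu L_1$, and transitivity of Hamiltonian isotopies in an open complement is a substantially harder question (the symplectic topology of the complement is different, and the global machinery behind \cite{LLWuniq} does not restrict to it). Likewise, cutting off a generating Hamiltonian near $L_1$ only produces the isotopy you want if you already know the trace of the isotopy avoids a neighborhood of $L_1$ --- which is precisely the unknown. The paper sidesteps all of this with a conjugation trick: with $\phi_1(L_1)=K_1$ and $\psi_1(\phi_1(L_2))=K_2$ (two plain applications of transitivity, where $K_1,K_2$ are the given disjoint representatives), the isotopy $\sigma_t=\phi_t^{-1}\circ\psi_t\circ\phi_t$ is Hamiltonian and $\sigma_1(L_2)=\phi_1^{-1}(K_2)$ is disjoint from $\phi_1^{-1}(K_1)=L_1$. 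Your argument becomes correct, and essentially identical to the paper's, once you delete the relative step and simply apply transitivity a second time between $L_2$ and $L''_2$.
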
 

\begin{proof}
The \textit{only if} direction is clear. For the \textit{if} direction, suppose that such disjoint Lagrangians $K_i$'s exist, in the same homology classes as the $L_i$. By Corollary 1.5 of \cite{LLWuniq}, we see that Hamiltonian isotopies act transitively in Lagrangians $\mathbb{R}P^2$'s in the same homology class. By applying the same Corollary 1.5 twice, we get a Hamiltonian isotopy $\phi_t$ such that $\phi_1(L_1)=K_1$ and a Hamiltonian isotopy $\psi_t$ such that $\psi_1(\phi_1(L_2))=K_2$. The isotopy $\sigma_t=\phi_t^{-1}\circ \psi_t\circ \phi_t$ is again Hamiltonian and $\sigma_1(L_2)$ is disjoint from $L_1$.
\end{proof}

We now proceed to establish the necessary condition for the disjunction of the two Lagrangian projective planes. 

\begin{proposition}\emph{(Necessity)}\label{2rpnec}
Suppose that $(X_3,\omega)$ carries two disjoint Lagrangian $\R P^2$'s, $L_1$ and $L_2$. Then, the inequality
\[\mu_1+\mu_2+\mu_3<h\]
holds.
\end{proposition}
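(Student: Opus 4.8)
The plan is to follow the blow-up strategy of Shevchishin--Smirnov (Theorem \ref{thrmss}), but applied simultaneously to \emph{both} disjoint Lagrangians. By Lemma \ref{existence2rp2}, after a diffeomorphism I may assume $L_1$ lies in the class $H$ and $L_2$ lies in $E_1+E_2+E_3$. Since the $L_i$ are disjoint, I can rationally blow up both of them at once; Lemma \ref{existbasislemma} tells me exactly what I get: the manifold $(X_5,\tilde\omega)$, together with a distinguished basis $\{Z_\infty,F,\widetilde E_0,\widetilde E_1,\widetilde E_2,\widetilde E_3\}$ in which the two replacement $(-4)$-spheres $S_1,S_2$ represent the classes $Z_\infty$ and $\widetilde E_0-\widetilde E_1-\widetilde E_2-\widetilde E_3$, with the intersection form given explicitly in that lemma.

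First I would record that $S_1$ and $S_2$ are disjoint symplectic $(-4)$-spheres in $(X_5,\tilde\omega)$, so positivity of area forces $\tilde\omega(S_1),\tilde\omega(S_2)>0$, and more refinedly, following the bookkeeping in the sketch of Theorem \ref{thrmss}, I expect each period to pick up a positive correction $\epsilon_i>0$ with $\tilde\omega(S_i)=4\epsilon_i$. The core of the argument is then to translate the periods of $\tilde\omega$ in the new basis back into the original periods $h,\mu_1,\mu_2,\mu_3$ of $\omega$ on $X_3$. This is the exact analogue of Equation (\ref{basiceqss})--(\ref{periodeqss}): the symplectomorphism between $X_3\setminus(\nu L_1\cup \nu L_2)$ and $X_5\setminus(\nu S_1\cup\nu S_2)$ induces a homological identification, and I would write down how $H,E_1,E_2,E_3$ correspond to combinations of $Z_\infty,F,\widetilde E_i$ so as to express $h,\mu_i$ in terms of the $\tilde\omega$-periods and the corrections $\epsilon_i$.

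The key step is to extract a single scalar inequality. Concretely, I would look for one homology class $C$ in $X_5$ whose $\tilde\omega$-area, when rewritten via the identification, equals (a positive multiple of) $h-(\mu_1+\mu_2+\mu_3)$ up to the positive $\epsilon$-corrections, and which is constrained to have positive area by Theorem \ref{basicsymptoprational} --- either because $C$ is (or contains) an exceptional class, or because positivity of $\tilde\omega(S_1)$ and $\tilde\omega(S_2)$ together already force it. Since both $S_i$ are removed by the blow-down and the complement is symplectically identified, the surviving positivity constraints on $X_5$ are precisely what should yield $\mu_1+\mu_2+\mu_3<h$ after the substitution. The inequality being strict (rather than $\le$) comes from the strictly positive areas $\tilde\omega(S_i)=4\epsilon_i>0$ of the two exceptional spheres.

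The main obstacle I anticipate is getting the homological dictionary exactly right: with two simultaneous blow-ups the change-of-basis relating $\{H,E_1,E_2,E_3\}$ to $\{Z_\infty,F,\widetilde E_0,\dots,\widetilde E_3\}$ is more intricate than the single-Lagrangian case of Theorem \ref{thrmss}, and one must carefully track \emph{two} independent parameters $\epsilon_1,\epsilon_2$ and verify they enter the final inequality with the correct (volume-preserving) signs so that the clean statement $\mu_1+\mu_2+\mu_3<h$ drops out. A secondary subtlety is confirming that the Lemma \ref{existbasislemma} identification is compatible, not merely smoothly but on the level of periods, with the $\omega$-periods on $X_3$; but since the surgery is supported in a neighborhood of the $L_i$ and the complement-symplectomorphism is an honest identification of symplectic forms, the period computation should go through as in the single-Lagrangian sketch, only doubled.
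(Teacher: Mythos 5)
Your outline reproduces the paper's strategy step for step: blow up both Lagrangians simultaneously, pass to the basis $\{Z_\infty,F,\widetilde E_0,\dots,\widetilde E_3\}$ of Lemma \ref{existbasislemma}, transport the periods through the complement identification, and harvest positivity constraints on $(X_5,\tilde\omega)$. However, as written the argument has a hole exactly at the step you flag: you never exhibit the homological dictionary entry or the class $C$, and these are the entire content beyond the single-Lagrangian case of Theorem \ref{thrmss}. Two corrections/completions are needed. First, the dictionary is \emph{not} a correspondence for $H,E_1,E_2,E_3$ individually: the identification of $X_3\setminus(\nu L_1\cup\nu L_2)$ with $X_5\setminus(\nu S_1\cup\nu S_2)$ only transports the sublattice of classes pairing trivially mod $2$ with \emph{both} $[L_1]=H$ and $[L_2]=E_1+E_2+E_3$, which is spanned by $2H$, $2E_k$ and $E_i-E_j$. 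The classes $2E_k$ and $E_i-E_j$ behave as in Equation (\ref{basiceqss}), so the relations (\ref{periodeqss}) persist and give $\tilde\mu_0=\frac{\mu_1+\mu_2+\mu_3}{2}+\epsilon$ with $\epsilon>0$; the genuinely new entry is $2H\leftrightarrow Z_\infty+4F$, pinned down by matching self-intersection ($4$) and first Chern number ($6$) on both sides and by checking that $Z_\infty+4F$ pairs trivially with both $Z_\infty$ and $\widetilde E_0-\widetilde E_1-\widetilde E_2-\widetilde E_3$. Setting $\alpha=\tilde\omega(Z_\infty+4F)$ and $\beta=\tilde\omega(F)$, this yields $\alpha=2h$.

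Second, the class $C$ you hope for exists and its positivity comes from precisely the two sources you name: take $C=Z_\infty+4(F-\widetilde E_0)$. Here $\tilde\omega(Z_\infty)=\alpha-4\beta>0$ because $S_1=Z_\infty$ is a symplectic sphere, and $\tilde\omega(F-\widetilde E_0)=\beta-\tilde\mu_0>0$ because $F-\widetilde E_0$ is an exceptional class, on which any symplectic form is positive by Theorem \ref{basicsymptoprational}. Chaining these,
\[
\frac{h}{2}=\frac{\alpha}{4}>\beta>\tilde\mu_0=\frac{\mu_1+\mu_2+\mu_3}{2}+\epsilon,
\]
which gives the strict inequality $h>\mu_1+\mu_2+\mu_3$. (Note that strictness is already supplied by the positivity of the exceptional class $F-\widetilde E_0$ and of $Z_\infty$; the $\epsilon$-correction from $\tilde\omega(S_2)$ only gives extra slack, slightly refining your attribution of strictness to the areas $\tilde\omega(S_i)$.) With these two computations written down, your plan becomes exactly the paper's proof; nothing in your outline would fail.
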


\begin{proof}
As per Lemma \ref{existence1rp2}, we may assume that $[L_1]=H$ and $[L_2]=E_1+E_2+E_3$. It is easy to see that the $\mathbb{Z}$-homology classes that pair trivially $\mod 2$ with $H$ and $E_1+E_2+E_3$ are exactly $E_i-E_j,2E_k$ and $2H$.\par

Let $(\widetilde{X},\tilde{\omega})$ be the rational symplectic manifold we obtain by blowing-up both $L_1$ and $L_2$. Since $b_2(\widetilde{X})=b_2(X_3)+2$, $\widetilde{X}$ is diffeomorphic to $X_5$.  For our computations, it is convenient to choose the special basis $\{ Z_\infty,F, \widetilde{E}_0,\widetilde{E}_1,\widetilde{E}_2,\widetilde{E}_3 \}$ for $H_2(\widetilde{X},\Z )$ as in Lemma \ref{existbasislemma}. In this basis, the symplectic spheres $S_1$ and $S_2$ replacing the $L_i$ may, and will, be assumed to represent the homology classes $Z_\infty$ and $\widetilde{E}_0-\widetilde{E}_1-\widetilde{E}_2-\widetilde{E}_3$. \par 

Denoting the periods of $\widetilde{\omega}$ by

\[\tilde{\omega}(\widetilde{E}_i)=\tilde{\mu}_i,\quad \tilde{\omega}(F)=\beta,\quad  \tilde{\omega}(Z_\infty+4F)=\alpha\]
we may write

\[PD(\tilde{\omega})=\alpha F +\beta Z_\infty-\sum \tilde{\mu}_i\widetilde{E}_i. \]
By the identification of $X_3-(L_1\cup L_2)$ with $\widetilde{X}-(S_1\cup S_2)$, comparing Chern classes and self intersections yields that the class $2H$ now corresponds to the class $Z_\infty+4F$ which implies

\begin{equation}
    \alpha=2h.
\end{equation}

In addition, the equalities between the $\mu_i$ and the $\tilde{\mu}_j$ hold as in the proof of Theorem \ref{thrmss}. In particular, for some $\epsilon>0$,

\[\tilde{\mu}_0=\frac{\mu_1+\mu_2+\mu_3}{2}+\epsilon.\]

Since $\tilde{\omega}$ is symplectic, it should evaluate positively on every exceptional class so in particular on $F-\widetilde{E}_0$:

\begin{equation}
    \tilde{\omega}(F-\widetilde{E}_0)>0\Leftrightarrow \beta>\tilde{\mu}_0
\end{equation}

 In addition, since $Z_\infty$ is symplectic we have 

\begin{equation}
    \tilde{\omega}(Z_\infty)=\alpha-4\beta>0 \Rightarrow \frac{h}{2}>\beta
\end{equation}

Putting the above together, we get the claimed inequality: 
\begin{align*}
    \frac{h}{2}>\beta&>\tilde{\mu}_0\overset{(\ref{periodeqss})}{=}\frac{\mu_1+\mu_2+\mu_3}{2}+\epsilon\Rightarrow \\
    h&>\mu_1+\mu_2+\mu_3.
\end{align*}

\end{proof}

We now move to the sufficiency part. The idea is to show that if the symplectic manifold $(X_3,\omega)$ satisfies the inequality $\sum \mu_i<h$, then there exists a symplectic (in fact, K\"ahler) form $\tilde{\omega}$ on $\tilde{X}$ such that $(X_3,\omega)$ is obtained by rationally blowing down two symplectic $(-4)$-spheres in
$(\widetilde{X},\tilde{\omega})$. \par 

The K\"ahler form $\tilde{\omega}$ is constructed via the Nakai-Moishezon Theorem, applied to the complex manifold $\widetilde{X}$. The complex structure of $\widetilde{X}$ comes from considering the Hirzeburch surface $F_4$ containing a $(-4)$-rational curve and blowing-up once, away from the $(-4)$ and $(+4)$ curves and then blowing-up three more times on different points at the exceptional divisor.\par

First, let us characterize $\mathcal{K}(\widetilde{X})$, the K\"ahler Cone of $\widetilde{X}$. Compare with Section $3.1.2$ of \cite{SS}.

 \begin{lemma}\label{kconex5}
A class $[\tilde{\omega}]\in H^{2}_{DR}(\widetilde{X})$ of the form

\[PD([\tilde{\omega}])=\alpha F+\beta Z_\infty-\sum \tilde{\mu}_i\widetilde{E}_i\]
lies in $\mathcal{K}(\widetilde{X})$ if and only if

\begin{itemize}
    \item $2\alpha\beta-4\beta^2-\sum^4_{i=0} \tilde{\mu}^2_i>0$,
    \item $\tilde{\mu}_i>0$ for $0\leq i\leq 3$,
    \item $\beta-\tilde{\mu}_0>0$,
    \item $\alpha>0$
    \item $\tilde{\mu}_0-\tilde{\mu}_1-\tilde{\mu}_2-\tilde{\mu}_3>0$,
    \item $\alpha-4\beta>0$.
\end{itemize}
\end{lemma}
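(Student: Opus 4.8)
The plan is to compute the K\"ahler cone via the Nakai--Moishezon criterion. Since $\widetilde X$ is a smooth projective rational surface we have $H^{1,1}(\widetilde X)=H^2(\widetilde X;\R)$, and its K\"ahler cone is the open cone of classes $\Omega$ with $\Omega^2>0$ and $\Omega\cdot C>0$ for every irreducible curve $C$. I would first record the intersection numbers in the basis $\{Z_\infty,F,\widetilde E_0,\dots,\widetilde E_3\}$, namely $Z_\infty^2=-4$, $F^2=0$, $Z_\infty\cdot F=1$, $\widetilde E_i^2=-1$ and all other products zero, and read off $\Omega^2=2\alpha\beta-4\beta^2-\sum_{i=0}^3\tilde\mu_i^2$, which is the first displayed inequality.

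Next I would list the curves dictated by the construction of $\widetilde X$ as $F_4$ blown up at $p_0$ (off the two sections) and then at three points $p_1,p_2,p_3$ on the exceptional divisor $\widetilde E_0$: the negative section $Z_\infty$ (a $(-4)$-curve), the positive section $Z_0=Z_\infty+4F$ (self-intersection $+4$), the strict transform $F-\widetilde E_0$ of the fibre through $p_0$ (a $(-1)$-curve), the strict transform $\widetilde E_0-\widetilde E_1-\widetilde E_2-\widetilde E_3$ of $\widetilde E_0$ (a $(-4)$-curve, which is the sphere $S_2$ of Lemma \ref{existbasislemma}), and $\widetilde E_1,\widetilde E_2,\widetilde E_3$ (each a $(-1)$-curve). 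Pairing $\Omega$ with these gives $\alpha-4\beta$, $\alpha$, $\beta-\tilde\mu_0$, $\tilde\mu_0-\tilde\mu_1-\tilde\mu_2-\tilde\mu_3$ and $\tilde\mu_i$ respectively. Necessity is then immediate: a K\"ahler class is positive on all of these and has positive square, which is exactly the six inequalities (with $\tilde\mu_0>0$ recovered by adding the last two lines).

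For sufficiency I assume the six inequalities and must promote positivity on this short list to positivity on every irreducible curve. I split according to self-intersection. If $C^2\ge 0$, I invoke the Hodge index theorem: the inequality $\alpha>0$ says $\Omega\cdot Z_0>0$, and since $Z_0$ is effective with $Z_0^2>0$ this places $\Omega$ in the forward component of the positive cone; the light-cone inequality then yields $\Omega\cdot C>0$ for every effective $C$ with $C^2\ge 0$ (this is precisely the role of the otherwise redundant condition $\alpha>0$). If $C^2<0$, then $C$ is one of the finitely many negative curves of $\widetilde X$, and positivity against the list above is exactly the inequalities $\alpha-4\beta>0$, $\tilde\mu_0-\tilde\mu_1-\tilde\mu_2-\tilde\mu_3>0$, $\beta-\tilde\mu_0>0$ and $\tilde\mu_i>0$.

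The main obstacle is the last point: verifying that the negative curves listed above are the only ones. On $F_4$ the single negative curve is $Z_\infty$, and I would track the effective cone through the two blow-ups, checking that no further irreducible class acquires negative square. Concretely, any extra negative curve would be the strict transform of a curve on $F_4$ singular at $p_0$ of multiplicity at least $3$ with branches along the three chosen tangent directions; a generic choice of $p_1,p_2,p_3$ on $\widetilde E_0$ rules this out. This is entirely parallel to Section~3.1.2 of \cite{SS}, the only new ingredient being the disjoint $(-4)$-curve $Z_\infty$ already carried by $F_4$. With the list of negative curves pinned down, the two cases above give the stated equivalence.
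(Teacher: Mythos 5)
Your necessity direction matches the paper's: both pair the class against the visible curves $Z_\infty$, $Z_\infty+4F$, $F-\widetilde E_0$, $\widetilde E_0-\widetilde E_1-\widetilde E_2-\widetilde E_3$, $\widetilde E_i$, plus the volume condition (and your remark that $\tilde\mu_0>0$ is recovered from the other inequalities is a correct refinement, since $\widetilde E_0$ itself is not irreducible after the last three blow-ups). The sufficiency direction is where you genuinely diverge. The paper never classifies the negative curves of $\widetilde X$ and never invokes the Hodge index theorem: it takes an arbitrary irreducible curve $C=\nu_f F+\nu_\infty Z_\infty-\sum\nu_i\widetilde E_i$ not among the listed ones, observes that distinct irreducible curves pair non-negatively, so $\nu_\infty\geq\nu_0\geq\nu_1+\nu_2+\nu_3$, $\nu_f\geq 4\nu_\infty$, $\nu_\infty\geq 0$, $\nu_i\geq 0$, and then deduces $\tilde\omega(C)>0$ by a direct chain of estimates, uniformly in the sign of $C^2$. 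Your route buys conceptual clarity (the cone is cut out by the square condition, the light-cone inequality, and finitely many negative curves), at the price of needing the negative curves pinned down.

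That last point is where your proposal has a genuine gap, and you flag it yourself: the claim that $Z_\infty$, $F-\widetilde E_0$, $\widetilde E_0-\widetilde E_1-\widetilde E_2-\widetilde E_3$, $\widetilde E_1,\widetilde E_2,\widetilde E_3$ exhaust the irreducible curves of negative square is the entire content of the hard direction, and ``track the effective cone through the two blow-ups, a generic choice of $p_1,p_2,p_3$ rules this out'' is a plan, not a proof; moreover genericity is a delicate appeal here because the lemma is used for a specific complex structure, so it should be built into the construction explicitly (the only genuinely needed condition is that $p_1,p_2,p_3$ avoid the point where the strict transform of the fibre through $p_0$ meets the exceptional divisor, so that $F-\widetilde E_0$ stays irreducible --- an assumption the paper also uses implicitly). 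The good news is that your list is correct and the gap closes by pure intersection arithmetic rather than by a multiplicity analysis on $F_4$: if $C$ is irreducible and distinct from the listed curves, the non-negative pairings above give $\sum_{i=0}^3\nu_i^2\leq \nu_0^2+(\nu_1+\nu_2+\nu_3)^2\leq 2\nu_0^2\leq 2\nu_\infty^2$, hence
\begin{equation*}
C^2=2\nu_f\nu_\infty-4\nu_\infty^2-\sum_{i=0}^3\nu_i^2\;\geq\;8\nu_\infty^2-4\nu_\infty^2-2\nu_\infty^2\;=\;2\nu_\infty^2\;\geq\;0,
\end{equation*}
so no unlisted irreducible curve has negative square. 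With this inserted, your case split is sound: your Hodge-index step (placing $\Omega$ in the forward cone via $\Omega^2>0$ and $\Omega\cdot(Z_\infty+4F)=\alpha>0$, then applying the light-cone inequality to effective classes of non-negative square) correctly handles $C^2\geq 0$, and the finite list handles $C^2<0$. Note, though, that the computation needed to certify the list is essentially the same arithmetic the paper runs to conclude directly, which is why the paper's argument is shorter.
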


\begin{proof}
We are going to use the fact that $[\tilde{\omega}]$ has a K\"ahler representative if and only if it has positive area on every irreducible curve. The first inequality is just total volume positivity, while the rest come from area positivity of certain curves: 
\begin{itemize}
    
    \item $\tilde{\omega}(\widetilde{E}_i)=\tilde{\mu}_i$ for $0\leq i\leq 3$,
    \item $\tilde{\omega}(F-\widetilde{E}_0)=\beta-\tilde{\mu}_0$,
    \item $\tilde{\omega}(Z_\infty+4F)=\alpha$,
    \item $\tilde{\omega}(\widetilde{E}_0-\widetilde{E}_1-\widetilde{E}_2-\widetilde{E}_3)=\tilde{\mu}_0-\tilde{\mu}_1-\tilde{\mu}_2-\tilde{\mu}_3$,
    \item $\tilde{\omega}(Z_\infty)=\alpha-4\beta$.
\end{itemize}

Now, we have to show that the inequalities in the statement of the lemma imply that any irreducible (not necessarily smooth) holomorphic curve $C$ in $(\widetilde{X},\tilde{\omega})$ has positive $\tilde{\omega}$-area. Since we have already shown that these inequalities imply that $\tilde{\omega}$ is positive on all of the classes above, we can assume that $C$ does not lie in any of these classes. Therefore, it has non-negative intersection with all of them. In homology, $C$ can be written as

\[C=\nu_fF+\nu_\infty Z_\infty-\sum\nu_i\widetilde{E}_i\]
and the non-negative intersection condition gives the following inequalities for the coefficients of $C$:

\begin{itemize}
    \item $C\cdot(\widetilde{E_0}-\widetilde{E_1}-\widetilde{E_2}-\widetilde{E_3})\geq 0\Rightarrow \nu_0\geq \nu_1+\nu_2+\nu_3 \quad (1b).$
    \item $C\cdot Z_\infty\geq 0\Rightarrow \nu_f\geq 4\nu_\infty\quad (2b).$
    \item $C\cdot F\geq 0\Rightarrow \nu_\infty\geq 0\quad (3b).$
    \item $C\cdot \widetilde{E}_i\geq 0\Rightarrow \nu_i\geq 0 \quad(4b).$
    \item $C\cdot (F-\widetilde{E}_0)\geq 0\Rightarrow \nu_\infty\geq \nu_0\quad (5b).$
\end{itemize}
What we want to show is that 

\[\tilde{\omega}(C)=(\alpha-4\beta)\nu_\infty+\beta\nu_f-\sum\nu_i\tilde{\mu}_i>0.\]
Combining the inequalities coming from the periods and the intersection numbers that we get

\begin{align*}
    \alpha\nu_\infty&\overset{(5b)}\geq \alpha\nu_0\Rightarrow\\
    \alpha\nu_\infty &\geq \alpha\sum\nu_i\geq \sum \tilde{\mu}_i\sum\nu_i\overset{(*)}\geq \sum\tilde{\mu}_i\nu_i\Rightarrow\\
    (\alpha-4\beta+4\beta)\nu_\infty&\geq\sum\nu_i\tilde{\mu_i}\Leftrightarrow\\
    (\alpha-4\beta)\nu_\infty+4\beta\nu_\infty&\geq \sum\nu_i\tilde{\mu}_i\overset{(2b)}{\Rightarrow}\\
    (\alpha-4\beta)\nu_\infty+\beta\nu_f&\geq\sum\nu_i\tilde{\mu}_i\Leftrightarrow\\
    \tilde{\omega}(C)&\geq 0
\end{align*}

The case $\tilde{\omega}(C)=0$ can be easily ruled out because it would imply that all inequalities are actually equalities. In particular, $(*)$ being an equality implies that $\nu_i=0$ for $i=0,1,2,3$. In turn, this implies that $\nu_\infty=0=\nu_f$ since they are already non-negative. Therefore $C$ is in the trivial homology class which is absurd. 
\end{proof}

\begin{remark}
As a reality check, it is straightforward to see that the above inequalities imply that every exceptional class has positive area.
\end{remark}

Having established what $\mathcal{K}(\widetilde{X})$ is, we next show that when the periods satisfy the desired inequality, indeed there exist two disjoint Lagrangian projective planes.

\begin{proposition}\emph{(Sufficiency)}
Consider $(X_3,\omega)$ and assume that there exists Lagrangian $\mathbb{R}P^2$'s in the homology classes $H$ and $E_1+E_2+E_3$. If, additionally, the inequality
\[\mu_1+\mu_2+\mu_3<h\]
holds, then there exist two (possibly different) Lagrangian $\R P^2$'s, $L_1$ and $L_2$, that are disjoint.
\end{proposition}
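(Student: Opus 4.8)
The plan is to reverse the argument of Proposition \ref{2rpnec}: instead of extracting the inequality from a given disjoint pair, I will build on the complex surface $\widetilde{X}$ a Kähler form carrying two \emph{disjoint} symplectic $(-4)$-spheres, in the classes $Z_\infty$ and $\widetilde{E}_0-\widetilde{E}_1-\widetilde{E}_2-\widetilde{E}_3$, and then rationally blow both of them down. Since blowing down a symplectic $(-4)$-sphere glues in a copy of $B_2$ whose core is a Lagrangian $\R P^2$, and the two spheres are disjoint, this produces two disjoint Lagrangian $\R P^2$'s in a symplectic manifold whose periods I will arrange to be $h,\mu_i$. By Theorem \ref{basicsymptoprational} this form is diffeomorphic to $\omega$, and the diffeomorphism transports the disjoint pair to $(X_3,\omega)$.

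Concretely, write $s=\mu_1+\mu_2+\mu_3$ and, for small parameters $\delta,\epsilon>0$, set
\[
\alpha=2h,\qquad \beta=\frac{h}{2}-\frac{\delta}{4},\qquad \tilde\mu_0=\frac{s}{2}+\epsilon,\qquad \tilde\mu_k=\frac{\mu_i+\mu_j-\mu_k}{2}-\epsilon,
\]
for $\{i,j,k\}=\{1,2,3\}$. These are exactly the values obtained by inverting the blow-down period relations: $\alpha=2h$ and $\widetilde\omega(Z_\infty)=\alpha-4\beta=\delta$ record the blow-down of $Z_\infty$ (the $H$-Lagrangian), while the $\tilde\mu_i$ are dictated by Equation (\ref{periodeqss}) for the blow-down of $\widetilde{E}_0-\widetilde{E}_1-\widetilde{E}_2-\widetilde{E}_3$ (the $E_1+E_2+E_3$-Lagrangian). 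By construction the blow-down will then return periods $h,\mu_i$, independently of $\delta,\epsilon$.

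The heart of the matter is to check that this class lies in $\mathcal{K}(\widetilde{X})$, i.e. satisfies the six inequalities of Lemma \ref{kconex5}. A direct computation gives $\tilde\mu_0-\tilde\mu_1-\tilde\mu_2-\tilde\mu_3=4\epsilon>0$ and $\alpha-4\beta=\delta>0$, and $\alpha>0$ is clear; the conditions $\tilde\mu_k>0$ follow, for $\epsilon$ small, from the triangle inequality $\mu_i+\mu_j>\mu_k$ guaranteed by the existence of a Lagrangian in $E_1+E_2+E_3$ (Theorem \ref{thrmss}). The decisive condition is $\beta-\tilde\mu_0>0$, which reads $\tfrac{1}{2}(h-s)-\tfrac{\delta}{4}-\epsilon>0$; this holds for small $\delta,\epsilon$ \emph{precisely} because of the standing hypothesis $s<h$, and is the only place it enters. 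Finally, for the volume inequality $2\alpha\beta-4\beta^2-\sum_{i=0}^3\tilde\mu_i^2>0$ I expect the computation to show that, as $\delta,\epsilon\to 0$, one has $2\alpha\beta-4\beta^2\to h^2$ and $\sum_{i=0}^3\tilde\mu_i^2\to\mu_1^2+\mu_2^2+\mu_3^2$, so the inequality degenerates to the total-volume positivity $h^2-\sum\mu_i^2>0$ of $\omega$; hence it persists once $\delta,\epsilon$ are taken small enough.

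With all six inequalities satisfied, Nakai--Moishezon furnishes a Kähler form $\widetilde\omega$ in this class. The curves $Z_\infty$ and $\widetilde{E}_0-\widetilde{E}_1-\widetilde{E}_2-\widetilde{E}_3$ are disjoint (the blow-ups defining $\widetilde{X}$ are performed away from $Z_\infty$) and holomorphic, hence symplectic, so rationally blowing both down yields the two disjoint Lagrangian $\R P^2$'s, completing the argument. The step I expect to demand the most care is the blow-down bookkeeping: verifying that the surgery genuinely inverts the rational blow-up of Lemma \ref{existbasislemma}, so that the resulting manifold is $X_3$ with periods exactly $h,\mu_i$ (the auxiliary sizes $\delta,\epsilon$ disappearing), and that the diffeomorphism of Theorem \ref{basicsymptoprational} indeed carries the produced disjoint Lagrangians onto a disjoint pair for the prescribed $\omega$.
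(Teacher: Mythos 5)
Your proposal is correct and follows essentially the same route as the paper: the same period ansatz (your $\delta,\epsilon$ are the paper's $4\epsilon_1,\epsilon_2$), the same verification of the K\"ahler cone inequalities of Lemma \ref{kconex5} via Nakai--Moishezon, and the same rational blow-down of the disjoint holomorphic $(-4)$-curves in $Z_\infty$ and $\widetilde{E}_0-\widetilde{E}_1-\widetilde{E}_2-\widetilde{E}_3$, concluding by cohomology of periods and Theorem \ref{basicsymptoprational}. The volume computation you flag as expected is exactly what the paper records, namely $2\alpha\beta-4\beta^2-\sum_{i=0}^{3}\tilde\mu_i^2=h^2-\sum\mu_i^2-4(\epsilon_1^2+\epsilon_2^2)$, so it indeed persists for small parameters.
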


\begin{proof}
Theorem \ref{thrmss} implies the triangle inequality for the periods $\mu_i$ of $\omega$. Therefore, there exists some $\epsilon_2,\tilde{\mu_i}>0$ such that the $\tilde{\mu_i}$ satisfy

\[ \tilde{\mu}_0=\frac{\mu_1+\mu_2+\mu_3}{2}+\epsilon_2,\quad \tilde{\mu}_k=\frac{\mu_i+\mu_j-\mu_k}{2}-\epsilon_2,\]

In addition, the inequality $h-\mu_1-\mu_2-\mu_3>0$ implies, after possibly choosing an even smaller $\epsilon_2$, that there exists some $0<\epsilon_1$ such that

\[\frac{h-\sum\mu_i}{2}-\epsilon_2-\epsilon_1>0.\]
Now, let $\alpha$ and $\beta$ be defined by

\[\alpha=2h,\quad \alpha-4\beta=4\epsilon_1.\]
We can choose $\epsilon_1,\epsilon_2$ so the numbers $\alpha,\beta,\tilde{\mu}_i$ are the periods of a K\"ahler form $\tilde{\omega}$ on $\widetilde{X}$. Let us check that indeed all the inequalities of Lemma \ref{kconex5} can be satisfied by sufficiently small $\epsilon_1,\epsilon_2$.

\begin{itemize}
    \item $2\alpha\beta-4\beta^2-\sum^4_{i=0} \tilde{\mu}^2_i=h^2-\sum \mu_i^2 -4(\epsilon_1^2+\epsilon_2^2)$ and since the volume constraint is satisfied by $\omega$ it will be satisfied also for $\tilde{\omega}$, possibly even smaller $\epsilon_i$ .
    \item $\tilde{\mu_i}>0$ from the triangle inequality for the $\mu_i$.
    \item $\alpha=2h>0$.
    \item $\beta-\tilde{\mu}_0=\frac{h}{2}-\epsilon_1-\tilde{\mu}_0=\frac{h-\sum\mu_i}{2}-\epsilon_1-\epsilon_2$ where we used that $\tilde{\mu}_0=\frac{\sum \mu_i}{2}+\epsilon_2$. Since $h-\sum\mu_i>0$ by assumption, we can find $\epsilon_1,\epsilon_2$ small enough.
    \item the positivity on the $(-4)$-spheres holds by the definition since they will have area $4\epsilon_1$ and $4\epsilon_2$.
\end{itemize}
This implies that we can find a K\"ahler form on $\widetilde{X}$ with the above periods $\alpha,\beta,\tilde{\mu}_i$. Blowing down the exceptional spheres in the classes $Z_\infty$ and $\widetilde{E}_0-\widetilde{E}_1-\widetilde{E}_2-\widetilde{E}_3$ we get a symplectic form $\omega'$ with the same periods as $\omega$ but which also carries two Lagrangian $\mathbb{R}P^2$'s. Since $\omega'$ and $\omega$ have the same periods they are cohomologous and therefore $\omega$ also carries two disjoint Lagrangian $\mathbb{R}P^2$'s. 
\end{proof}

This concludes both the necessity and sufficiency part of

\begin{Theorem}\label{tworp2thrm}
Consider $(X_3,\omega)$ and suppose that the $\mathbb{Z}_2$ classes $H$ and $E_1+E_2+E_3$ carry Lagrangian $\mathbb{R}P^2$'s. Then, these Lagrangians can be made disjoint via a Hamiltonian isotopy if and only if $\sum\mu_i<h$.  
\end{Theorem}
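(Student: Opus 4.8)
The plan is to assemble the theorem from the three preparatory results proved above, with Lemma \ref{extoham} serving as the bridge between the geometric notion of Hamiltonian disjunction and the homological/period analysis carried out via the rational blow-up. The point of that lemma is precisely that, for Lagrangian $\mathbb{R}P^2$'s in $X_3$, being Hamiltonian-disjoinable is equivalent to the mere existence of \emph{some} pair of disjoint representatives in the same homology classes; this is what lets us replace the original $L_1, L_2$ by whatever convenient disjoint pair the blow-up constructions produce.

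First I would establish necessity. If $L_1$ and $L_2$ can be made disjoint by a Hamiltonian isotopy, then applying the isotopy yields two genuinely disjoint Lagrangian $\mathbb{R}P^2$'s, still in the classes $H$ and $E_1+E_2+E_3$ since a Hamiltonian isotopy preserves homology classes. Proposition \ref{2rpnec} applies verbatim to this configuration and returns the inequality $\mu_1+\mu_2+\mu_3<h$.

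For sufficiency I would run the implication in the other direction. Assuming $\sum\mu_i<h$, and using the standing hypothesis that Lagrangian $\mathbb{R}P^2$'s already exist in the classes $H$ and $E_1+E_2+E_3$, the Sufficiency Proposition manufactures a pair of disjoint Lagrangian $\mathbb{R}P^2$'s $L_1', L_2'$ in exactly these two classes. Lemma \ref{extoham} then upgrades the existence of this disjoint homologous pair to a Hamiltonian isotopy disjoining the original $L_1$ and $L_2$, which is the desired conclusion.

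The only genuine bookkeeping to watch is the identification of homology classes: Lemma \ref{existence2rp2} ensures that in $X_3$ any two disjoint Lagrangians lie, up to diffeomorphism, in the classes $H$ and $E_1+E_2+E_3$, so that both Proposition \ref{2rpnec} and the Sufficiency Proposition may legitimately be stated for these specific classes without loss of generality. With that matching in hand, there is no further analytic content in the final theorem; it is exactly the conjunction of the three results, and I do not anticipate any real obstacle beyond confirming that the period relations derived in Proposition \ref{2rpnec} and used in the sufficiency construction are the same relations referenced in the theorem's hypotheses.
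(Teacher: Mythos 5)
Your proposal is correct and matches the paper's own argument: the paper proves Theorem \ref{tworp2thrm} by exactly this assembly, combining Proposition \ref{2rpnec} for necessity with the Sufficiency Proposition and Lemma \ref{extoham} for sufficiency (the theorem is introduced with the words ``This concludes both the necessity and sufficiency part of''). Your attention to the homology-class bookkeeping via Lemma \ref{existence2rp2} is the same normalization the paper uses inside Proposition \ref{2rpnec}, so there is nothing missing.
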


\begin{corollary}\label{dp3cor}
In $\D_3$, both classes $H$ and $E_1+E_2+E_3$ carry a Lagrangian $\R P^2$ but these Lagrangians cannot be made disjoint by a Hamiltonian isotopy.
\end{corollary}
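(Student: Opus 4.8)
The plan is to derive this corollary directly from the three results already established for $X_3$, evaluated at the monotone periods that define $\D_3$. Recall (Example 3) that $\D_3$ is $X_3$ equipped with the symplectic form whose periods satisfy $\mu_1=\mu_2=\mu_3=h/3$. Everything then reduces to substituting these values into the two existence criteria and into Theorem A, so no new machinery is needed.

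First I would confirm that both $\Z_2$-classes are genuinely realized by Lagrangian $\R P^2$'s for the monotone form, since Theorem \ref{tworp2thrm} presupposes this. For $E_1+E_2+E_3$, Theorem \ref{thrmss} requires the triangle inequality on the periods: with $\mu_i=h/3$ one has $\mu_i+\mu_j=2h/3>h/3=\mu_k$, so the class carries a Lagrangian $\R P^2$. For $H$, Corollary \ref{corss} requires $\mu_k<h/2$ for each $k$; since $h/3<h/2$ this holds, so $H$ is realized as well. Thus the hypotheses of Theorem \ref{tworp2thrm} are met and the disjunction question is well-posed.

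Second, I would invoke Theorem \ref{tworp2thrm} (Theorem A), which asserts that the two Lagrangians can be made disjoint by a Hamiltonian isotopy if and only if $\mu_1+\mu_2+\mu_3<h$. At the monotone point this sum equals exactly $h$, so the required \emph{strict} inequality fails and the separation is impossible. There is no genuine obstacle in the argument: the content of the corollary is precisely that $\D_3$ sits on the boundary of the disjunction region. The two existence inequalities ($2h/3>h/3$ and $h/3<h/2$) hold with room to spare, whereas the criterion of Theorem A is tight, and the monotone normalization $\sum\mu_i=h$ saturates it exactly. Hence the only point to emphasize is the strictness of Theorem A's inequality, as it is the equality case — the defining feature of the monotone form — that obstructs the disjunction.
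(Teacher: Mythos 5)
Your proposal is correct and follows exactly the route the paper intends: the paper states Corollary \ref{dp3cor} as an immediate consequence of Theorem \ref{tworp2thrm}, using that the monotone periods $\mu_i=h/3$ satisfy both existence criteria (Theorem \ref{thrmss} and Corollary \ref{corss}) but saturate the strict inequality $\sum\mu_i<h$ with equality. Your emphasis on the strictness of Theorem A's inequality at the monotone point is precisely the content of the corollary, so nothing is missing.
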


\subsection{In $\D_k$.}

We can generalize Corollary \ref{dp3cor} to the symplectic del Pezzo surfaces $\D_k$ for $k=4,5,6$. This is done by proving that, under the obvious homological restrictions, any exceptional sphere can be made disjoint from two disjoint Lagrangian $\R P^2$'s and thus showing that if $\D_k$ carried disjoint Lagrangian $\R P^2$'s, then so would $\D_{k-1}$ and thus deducing the desired result from $\D_3$.\par

\begin{lemma}\label{inductionstep}
Let $(X_k,\omega)$ be a rational symplectic manifold for $k\leq 6$ that has two disjoint Lagrangian $\R P^2$'s, $L_1$ and $L_2$. If there exists an exceptional symplectic sphere $E'$ such that $[L_1]\cdot [E']=0=[L_2]\cdot [E']$ then there exists an exceptional symplectic sphere $E$ homologous to $E'$ such that $E$ is disjoint from the $L_i$.
\end{lemma}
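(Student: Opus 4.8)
The plan is to reduce the statement to a pseudoholomorphic-curve argument by rationally blowing up \emph{both} Lagrangians at once. Since $L_1$ and $L_2$ are disjoint, the two surgeries are supported in disjoint regions, and by Theorem~\ref{ratblowuponratman} they produce a rational symplectic manifold $(\widetilde X,\tilde\omega)$ with $b_2(\widetilde X)=b_2(X_k)+2$, so that $\widetilde X\cong X_{k+2}$ (there is no $S^2\times S^2$ ambiguity, as $b_2\geq 6$ once $k\geq 3$, which is forced by Lemma~\ref{existence2rp2}). Crucially, the hypothesis $k\leq 6$ gives $k+2\leq 8$, placing us in the del Pezzo range where the exceptional classes are finite in number and completely understood (Example 2). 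The Lagrangians are replaced by disjoint embedded symplectic $(-4)$-spheres $S_1,S_2$, and there is a symplectomorphism $X_k-\nu(L_1\cup L_2)\cong\widetilde X-\nu(S_1\cup S_2)$ with an accompanying homological identification along the lines of Lemma~\ref{existbasislemma} and the identifications of Equation~\eqref{basiceqss}.

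First I would translate the hypotheses into $\widetilde X$. The assumption $[E']\cdot[L_i]=0\bmod 2$ for $i=1,2$ says that $[E']$ pairs trivially mod $2$ with both Lagrangian classes, which (exactly as in the computation preceding Proposition~\ref{2rpnec}) characterises the classes carried by the complement $X_k-\nu(L_1\cup L_2)$. Under the identification above $[E']$ therefore corresponds to a genuine integral class $\widetilde E\in H_2(\widetilde X,\Z)$ that is represented by a cycle lying in the complement, so that $\widetilde E\cdot[S_i]=0$ \emph{integrally}. Because the identification preserves the intersection form and the first Chern class, $\widetilde E^2=-1$ and $c_1(\widetilde X)\cdot\widetilde E=1$, so $\widetilde E$ is again an exceptional class; by Theorem~\ref{basicsymptoprational} it is carried by a symplectic $(-1)$-sphere for $\tilde\omega$.

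The heart of the argument is to upgrade this to a symplectic exceptional sphere in class $\widetilde E$ that is \emph{geometrically} disjoint from $S_1\cup S_2$. I would choose an $\tilde\omega$-compatible almost complex structure $J$ for which the disjoint embedded spheres $S_1,S_2$ are $J$-holomorphic, and represent $\widetilde E$ by a $J$-holomorphic subvariety $Z$ (the Gromov--Taubes invariant of an exceptional class on a rational surface is $\pm 1$, so $Z$ exists for every such $J$). Positivity of intersections forces each irreducible component of $Z$ to meet $S_i$ non-negatively, and since these contributions sum to $\widetilde E\cdot[S_i]=0$, every component is disjoint from $S_i$ unless it equals $S_i$. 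Ruling out the degenerate possibility that $S_1$ or $S_2$ occurs as a component of $Z$ is the main obstacle: combinatorially it is not excluded, since a negative class such as $\widetilde E$ could a priori break into a tree of spheres containing the $(-4)$-spheres. I would therefore invoke genericity. For a generic $J$ among those making $S_1,S_2$ holomorphic, transversality holds for all simple curves not contained in $S_1\cup S_2$, and since $c_1(\widetilde X)\cdot\widetilde E=1>0$ automatic transversality (Hofer--Lizan--Sikorav) applies to the embedded representative; a dimension count, finite precisely because $k+2\leq 8$, shows that configurations containing $S_i$ as a component occur only in positive codimension and are avoided by the generic $J$. Hence $\widetilde E$ is represented by a single embedded $J$-holomorphic (hence symplectic) sphere $\widetilde S$ disjoint from $S_1\cup S_2$.

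Finally I would transport the result back. The sphere $\widetilde S$ lies in $\widetilde X-\nu(S_1\cup S_2)$, which is symplectomorphic to $X_k-\nu(L_1\cup L_2)$; its image $E$ is therefore a symplectic exceptional sphere in $X_k$ disjoint from both $L_1$ and $L_2$, and by construction $[E]=[E']$. This is exactly the conclusion. The one delicate point throughout is the transversality step of the previous paragraph; everything else is bookkeeping with the intersection form together with an application of positivity of intersections.
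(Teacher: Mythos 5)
Your overall route coincides with the paper's: blow up both Lagrangians to get $(X_{k+2},\tilde\omega)$ with disjoint symplectic $(-4)$-spheres $S_1,S_2$, observe that $[E']$ survives as an exceptional class $\widetilde E$ with $\widetilde E\cdot[S_i]=0$ (since $Y^2=-1$, $c_1(Y)=1$ are untouched by the surgery), produce a $J$-holomorphic exceptional sphere in class $\widetilde E$ for a $J$ making both $S_i$ holomorphic, conclude disjointness by positivity of intersections, and transport back through the symplectomorphism of complements. Your bookkeeping (the $b_2$ count, ruling out the $S^2\times S^2$ ambiguity, the mod-$2$ pairing translation) is fine and in places more careful than the paper's. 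The difference is at what you yourself flag as the heart of the argument: the paper does not attempt a genericity proof there, but instead cites Theorem 1.2.7 of \cite{McDOp}, which asserts precisely that for a suitable compatible $J$ preserving the configuration $S_1\cup S_2$ there is an embedded $J$-holomorphic representative of the exceptional class.

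That step, as you sketch it, has a genuine gap. First, the numerics do not close: with $c_1([S_i])=2+[S_i]^2=-2$, a degenerate representative $\widetilde E=a_1[S_1]+a_2[S_2]+\sum_j m_jA_j$ with simple components $A_j\not\subset S_1\cup S_2$ only needs $\sum_j m_jc_1(A_j)=1+2a_1+2a_2$, which is perfectly compatible with the constrained-generic bound $c_1(A_j)\geq 1$; so index positivity alone excludes nothing, and the finiteness of exceptional classes for $k+2\leq 8$ does not help because the $A_j$ need not be exceptional. Second, Hofer--Lizan--Sikorav automatic transversality governs the regularity (and persistence under deformation of $J$) of an \emph{already embedded} representative; it says nothing about excluding nodal limits that contain $S_1$ or $S_2$, possibly multiply covered, which is exactly the degeneration you must rule out. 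The missing content is the statement that, within the space of $J$'s constrained to make $S_1,S_2$ holomorphic, the strata of $J$'s admitting such configurations have positive codimension --- and establishing this near fixed negative spheres, where naive genericity fails and covers of the $S_i$ appear with negative index, is the substance of McDuff--Opshtein's analysis of nongeneric $J$-holomorphic curves; it is a paper-length argument, not a routine dimension count. So either carry out that stratification analysis in full, or do as the paper does and invoke \cite{McDOp} directly; with that substitution your proof is the paper's proof.
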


\begin{proof}
For an $X_{k}$ with $k\leq 8$, the homology classes $Y$ that carry an exceptional sphere are exactly those satisfying $Y^2=-1$ and $c_1(Y)=1$. Since these two conditions are not affected by the symplectic rational blow-up, such class $Y$ in $X_k$, for $k\leq 6$, remains exceptional after the symplectic rational blow-up. In particular, $[E']$ will still be an exceptional class in the rational manifold $(X_{k+2},\tilde{\omega})$ obtained by blowing-up $L_1$ and $L_2$.\par 

Let $S_1,S_2$ be the $(-4)$-spheres in $X_{k+2}$, replacing the $L_i$ after rational the blow-up. Theorem 1.2.7 in \cite{McDOp} shows that there exists a compatible almost complex structure $J$ such that both $S_i$ are $J$-complex and there exists a $J$-complex exceptional sphere $E$ in the homology class $[E']$. By positivity of intersections, $E$ is disjoint from the $S_i$ and thus one can see $E$ also in $X_k$, in the complement of the $L_i$. Therefore $E$ is the desired exceptional sphere.
\end{proof}

We can now prove the claimed rigidity result for del Pezzo surfaces.

\begin{Theorem}\label{delpezzothrm}
Let $L_1$ and $L_2$ be two Lagrangian $\R P^2$ in a del Pezzo surface $\D_k$ for $3\leq k \leq 6$. Then $L_1$ and $L_2$ must intersect at least at one point.
\end{Theorem}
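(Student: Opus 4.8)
The plan is to argue by downward induction on $k$, with $\D_3$ as the base case. For $k=3$, Corollary \ref{dp3cor} already shows that the classes $H$ and $E_1+E_2+E_3$—which by Lemma \ref{existence2rp2} are the only candidates for hosting a pair of disjoint Lagrangian $\R P^2$'s in this range—cannot be realized by two disjoint Lagrangians. So I would set up the inductive hypothesis as: $\D_{k-1}$ carries no two disjoint Lagrangian $\R P^2$'s, and try to deduce that $\D_k$ carries none either, for $4\le k\le 6$.

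Suppose, for contradiction, that $\D_k$ with $4\le k\le 6$ contains two disjoint Lagrangian $\R P^2$'s $L_1,L_2$. By Lemma \ref{existence2rp2}, up to a diffeomorphism their $\Z_2$-classes are either $\{H,\,E_1+E_2+E_3\}$ or $\{H,\,E_1+\dots+E_7\}$; the latter requires seven distinct exceptional classes and hence $k\ge 7$, so it is excluded. Thus I may take $[L_1]=H$ and $[L_2]=E_1+E_2+E_3$. The class $E_k$ carries a symplectic exceptional sphere by Theorem \ref{basicsymptoprational}, and since $k\ge 4$ it satisfies $H\cdot E_k=0=(E_1+E_2+E_3)\cdot E_k$. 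Applying Lemma \ref{inductionstep} with $E'=E_k$ then produces an exceptional symplectic sphere $E$ homologous to $E_k$ and disjoint from both $L_1$ and $L_2$.

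Next I would symplectically blow down $E$. Because $E$ is disjoint from $L_1,L_2$, the two Lagrangian $\R P^2$'s persist in the blown-down $X_{k-1}$, remain disjoint, and keep the classes $H$ and $E_1+E_2+E_3$ (blowing down only removes the $E_k$ summand). The decisive bookkeeping is on periods: since $\D_k$ is monotone, $\omega(E)=\mu_k=h/3>0$, and the blown-down form has periods $h,\mu_1,\dots,\mu_{k-1}=h,h/3,\dots,h/3$, which are precisely the periods of the monotone form on $X_{k-1}$. By the uniqueness clause of Theorem \ref{basicsymptoprational}, the blown-down symplectic manifold is diffeomorphic to $\D_{k-1}$, and the diffeomorphism carries our two disjoint Lagrangians into $\D_{k-1}$, contradicting the inductive hypothesis.

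I expect the main obstacle to be exactly this blow-down step: one must ensure that the blow-down stays within the monotone del Pezzo family, i.e. that the resulting form is cohomologous to—and therefore, by Theorem \ref{basicsymptoprational}, diffeomorphic to—the standard form on $X_{k-1}$, rather than landing on some other symplectic form for which disjunction is a genuinely different question. This is precisely what the monotone normalization $\mu_i=h/3$ secures. A secondary point to verify is that $E'=E_k$ remains an exceptional class through the two rational blow-ups used internally in Lemma \ref{inductionstep}, but this is already guaranteed there by the invariance of the conditions $Y^2=-1$ and $c_1(Y)=1$ under the surgery.
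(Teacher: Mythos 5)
Your proposal is correct and follows essentially the same route as the paper: reduce to the classes $H$ and $E_1+E_2+E_3$ via Lemma \ref{existence2rp2}, use Lemma \ref{inductionstep} to find an exceptional sphere in $E_k$ disjoint from both Lagrangians, blow it down, and induct down to the base case $\D_3$ settled by Theorem \ref{tworp2thrm}. Your explicit verification that the blow-down stays in the monotone family (via $\omega(E)=h/3$ and the uniqueness clause of Theorem \ref{basicsymptoprational}) is a point the paper leaves implicit, and it is a welcome addition rather than a deviation.
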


\begin{proof}

Suppose on the contrary that $L_1$ and $L_2$ are disjoint. By Lemma \ref{existence2rp2} we can assume that $[L_1]=H$ and $[L_2]=E_1+E_2+E_3$. In $\D_3$, the statement follows from Theorem \ref{tworp2thrm}. Now, we consider $\D_n$ for $4\leq n \leq 6$. Then, there exists an exceptional sphere $S$ such that  $[S]\cdot [L_i]=0$. For example, one may take $[S]=E_4$. By Lemma \ref{inductionstep}, if $L_1$ and $L_2$ are disjoint in $\D_n$, there is an exceptional sphere in $[S]$ disjoint from the $L_i$ and then, by blowing it down, there exist disjoint Lagrangian $\R P^2$'s also in $\D_{n-1}$. Repeating this argument, if there exist disjoint Lagrangian $\R P^2$'s in $\D_n$ for $n\leq 6$, representing the classes $H$ and $E_1+E_2+E_3$, there must also exist disjoint Lagrangians in $\D_3$ which is not possible.

\end{proof}

For $k=7,8$ the situation is not very clear. First, the arguments for Lemma \ref{inductionstep} should become more nuanced, but we believe this is only a technicality. Second, in $\D_7$ there is another possible pair of disjoint Lagrangians as Lemma \ref{existence2rp2} shows and therefore this case must be separately examined. Although we don't expect that this extra case is very different from the one considered here, it is computationally more complicated so we defer its study to future work.

\section{An $L_{3,1}$ pinwheel in $S^2\times S^2$.}

In this section we will use the rational blow-up along an $L_{3,1}$ pinwheel to answer Problem $J.7$ in Evan's beautiful book \cite{evans_2023}. Following notation of Example 4 in section \ref{basicsympgeometry}, the problem can be formulated as follows:

\begin{prob}
For what values of $\lambda$ does $(S^2\times S^2,\omega_\lambda)$ support a Lagrangian $L_{3,1}$ pinwheel in the homology class $A+B$?
\end{prob}

From now on, we will always assume that the pinwheel in question will exist in the $\Z_3$-homology class $A+B$.\par 
The above problem was motivated by the fact that one can draw a visible Lagrangian $L_{3,1}$ for some values of $\lambda$.  More concretely, Evans showed

\begin{lemma}\emph{(Section $J.7$ in \cite{evans_2023})}\label{l31suf}
The symplectic manifold $(S^2\times S^2,\omega_\lambda)$ carries a visible $L_{3,1}$ pinwheel for the values $\frac{1}{2}<\lambda< 2$. 
\end{lemma}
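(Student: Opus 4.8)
The plan is to realize the pinwheel as a \emph{visible Lagrangian} in a Lagrangian torus fibration on $(S^2\times S^2,\omega_\lambda)$, following the toric/almost-toric philosophy of \cite{evans_2023}. First I would fix the toric moment map $\pi\colon S^2\times S^2\to R_\lambda$, whose image is the rectangle $R_\lambda=[0,\lambda]\times[0,1]$ with its integral affine structure; the four edges are the moment images of the spheres in the classes $A$ and $B$, and over the interior the fibers are Lagrangian $2$-tori. The basic mechanism I would exploit is that, in action-angle coordinates $\omega=\sum dp_i\wedge dq_i$, the sub-bundle over a straight segment $\ell\subset R_\lambda$ obtained by taking in each fiber the circle generated by a vector $w$ \emph{conormal} to $\ell$ is automatically isotropic: the tangent space is spanned by the base direction $v$ of $\ell$ and the fiber direction $w$, and $\omega$ evaluates to $v\cdot w=0$. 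Thus any such sub-bundle is a Lagrangian cylinder over the interior of $\ell$, and the whole problem is to cap it off into $L_{3,1}$.

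Next I would analyse the endpoints. The topology at an endpoint $p$ of $\ell$ lying on an edge $e$ is governed by the integer $d=|\det(w\mid n_e)|$, where $n_e$ is the primitive integral vector spanning the circle that collapses over $e$: when $d=1$ the conormal circle collapses smoothly and caps the cylinder with the smooth centre of a $2$-cell, while for $d\geq 2$ the fiber limits onto a $d$-fold cover of the vanishing circle, so the $2$-cell is attached by $z\mapsto z^{d}$. To produce $L_{3,1}$ I would therefore choose $\ell$ so that $|\det(w\mid n_e)|=1$ at one endpoint (the centre of the $2$-cell) and $|\det(w\mid n_{e'})|=3$ at the other, the latter yielding the degree-$3$ attaching map and the singular core circle along which three sheets meet. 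A short computation recording how $\ell$ separates the edges of $R_\lambda$ then identifies the resulting $\Z_3$-fundamental class with $[A+B]$.

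The actual content of the bound $\tfrac12<\lambda<2$ appears only in the final step, where I would determine for which $\lambda$ a segment of the slope forced by the two determinant conditions, with its two endpoints on the correct edges, genuinely fits inside $R_\lambda$. The requirement that $\ell$ reach the degree-$3$ vertex and close up smoothly at the other end becomes a pair of linear inequalities in $\lambda$; using the reflection $\lambda\mapsto 1/\lambda$---realized symplectically by swapping the two $S^2$ factors---I would reduce to $\lambda\geq 1$ and expect the constraints to collapse to $\lambda<2$, hence $\tfrac12<\lambda<2$ after undoing the reduction.

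The step I expect to be the main obstacle is pinning down the combinatorics so that the visible surface is \emph{exactly} $L_{3,1}$ and nothing more. As already happens for the monotone $\R P^2$ in $\C P^2$, the plain integral affine rectangle need not contain a straight segment of the slope demanded by $|\det(w\mid n_{e'})|=3$ with both endpoints on suitable edges; in that case one must first perform an almost-toric modification (a nodal trade) to create a focus-focus node and route $\ell$ into it, and then check that the shear monodromy around the node realizes the degree-$3$ attaching without introducing spurious self-intersections or further degenerations. Verifying that this modified base still admits the segment precisely in the window $\tfrac12<\lambda<2$, and that the capped surface is a genuinely embedded $L_{3,1}$ rather than an immersed or more singular complex, is the delicate part; the Lagrangian and homological checks above are then routine. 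This establishes the \say{if} direction of Theorem \ref{jquestion}, the converse being supplied separately by the rational blow-up.
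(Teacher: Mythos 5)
Your overall strategy (visible Lagrangians over a straight segment in a torus fibration, with the attaching degree read off from a determinant) is the right one, and indeed it is how Evans argues; note that the paper itself does not reprove this lemma but simply cites his construction in Section J.7 of \cite{evans_2023}. However, your capping criterion contains a genuine error. At an endpoint lying in the \emph{interior} of an edge $e$, the conormal circle generated by $w$ limits onto a $d$-fold cover of the reduced circle fiber over $e$, where $d=\abs{\det(w\mid n_e)}$. When $d=1$ this limit is a degree-one circle, so the closure of the cylinder is a surface \emph{with boundary}, not a disc capped by a smooth interior point. A smooth cap would require the conormal circle to coincide with the collapsing circle, i.e. $\det(w\mid n_e)=0$, i.e. $w=\pm n_e$; since $w$ is conormal to $\ell$, this forces $\ell$ to be parallel to $e$, which is impossible for a segment hitting $e$. (Capping at a Delzant corner is also unavailable here: the cone on the $T(1,k)$-knot is smooth only for $k=1$, and direction $(1,1)$ at a corner of the rectangle yields $d=1$ at every edge of the square, never $3$.) Consequently no straight segment in the plain rectangle $[0,\lambda]\times[0,1]$ produces a closed pinwheel at all, so the nodal trade you mention in your last paragraph is not a contingency to be checked but the essential mechanism — and with the roles reversed relative to what you wrote: the focus-focus node supplies the \emph{smooth cap} (the conormal class must equal the vanishing cycle, forcing $\ell$ along the eigenray), while the $z\mapsto z^3$ attachment happens at the \emph{edge} end, via an edge whose collapsing class pairs with the vanishing cycle with determinant $3$, which the standard edges of the rectangle do not do until one mutates the base. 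This is exactly the local picture of the core of $B_{3,1}$ in its almost toric fibration, and it is the analogue of the familiar visible $\R P^2=L_{2,1}$ in $\C P^2$ over the eigenray from the node to the hypotenuse, where $\abs{\det((1,-1)\mid(1,1))}=2$.

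The second gap is quantitative: the entire content of the lemma is the window $\tfrac12<\lambda<2$, and your proposal never derives it — you only \say{expect the constraints to collapse to $\lambda<2$} after the reflection $\lambda\mapsto 1/\lambda$. Since the degree conditions fix the affine data of $\ell$ (eigenray direction and mutated edge conormal) completely, the fitting inequalities are concrete linear conditions that must actually be computed in the mutated base diagram; without exhibiting that diagram and that computation, the construction is not established for any $\lambda$, let alone sharply on $(\tfrac12,2)$. So as it stands the proposal identifies the correct framework but rests on an incorrect capping lemma and leaves the decisive existence-and-fitting step undone; repairing it amounts to carrying out Evans's explicit almost toric construction.
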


We will show that the inequality $\frac{1}{2}<\lambda<2$ is sharp. In fact, it will be more natural to work with the symplectic form $\omega_{a,b}$ on $S^2\times S^2$, where $a,b>0$ are the areas of the two product factors. With this notation, clearly $\omega_\lambda=\omega_{\lambda,1}$. We note that the proof below is structurally exactly the same as the proof of Proposition \ref{2rpnec} so we supress some of the details.

\begin{proposition}\label{l31nec}
Let $(S^2\times S^2,\omega_{a,b})$ carry a Lagrangian $L_{3,1}$ in the homology class $A+B$. Then
\[\frac{b}{2}<a<2b.\]
\end{proposition}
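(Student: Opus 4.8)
The plan is to reproduce, essentially verbatim, the necessity argument of Proposition~\ref{2rpnec}. First I would rationally blow up the $L_{3,1}$: by the local model recalled in Section~\ref{ratblowupsec} it is replaced by a symplectic $(-5)$-sphere $S_1$ and a symplectic $(-2)$-sphere $S_2$ meeting once and orthogonally, and by Theorem~\ref{ratblowuponratman} the resulting $(\widetilde{X},\tilde\omega)$ is again a rational surface. Since the configuration $\mathcal{S}_3$ consists of two spheres, $b_2(\widetilde{X})=b_2(S^2\times S^2)+2=4$, so $\widetilde{X}$ is diffeomorphic to $X_3$ (there is no ambiguity since $b_2>2$). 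Everything then reduces to carrying the periods $a,b$ across the identification of complements and reading off positivity, exactly as before.

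The key preparatory step is to pick a model of $X_3$ adapted to the configuration, playing the role of the $Z_\infty$-basis of Lemma~\ref{existbasislemma}. I would realise $X_3$ as the Hirzebruch surface $F_5$ — whose negative section $Z$ satisfies $Z^2=-5$ — blown up at two distinct points on a single fibre $f$, away from $Z$. This produces a basis $\{Z,f,e_1,e_2\}$ of $H_2(\widetilde{X},\Z)$ in which one checks $S_1=Z$, $S_2=f-e_1-e_2$, and $c_1=2Z+7f-e_1-e_2$. As in the proof of Theorem~\ref{thrmss}, I would then use the symplectomorphism between $(S^2\times S^2)\setminus\nu L$ and $\widetilde{X}\setminus\nu(S_1\cup S_2)$ to match homology classes of the two complements by their self-intersections, their $c_1$-values and their mutual intersection numbers. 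The classes descending from the complement are those meeting $[A+B]$ with multiplicity divisible by $3$; matching invariants identifies $A-B\leftrightarrow e_1-e_2$, $A+2B\leftrightarrow Z+5f-e_1$ and $2A+B\leftrightarrow Z+5f-e_2$, so that as rational classes $3A\leftrightarrow Z+5f+e_1-2e_2$ and $3B\leftrightarrow Z+5f-2e_1+e_2$. Writing $z=\tilde\omega(Z)$, $\phi=\tilde\omega(f)$ and $\eta_i=\tilde\omega(e_i)$, this translates into the period identities
\[ 3a=z+5\phi+\eta_1-2\eta_2,\qquad 3b=z+5\phi-2\eta_1+\eta_2. \]

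Finally I would invoke Theorem~\ref{basicsymptoprational}: $\tilde\omega$ is positive on every exceptional class of $X_3$, in particular on $e_2$ and on $f-e_1$, and it is positive on the symplectic $(-5)$-sphere $Z$. Taking the appropriate combinations of the two period identities gives
\[ 3(2b-a)=z+5(\phi-\eta_1)+4\eta_2>0,\qquad 3(2a-b)=z+5(\phi-\eta_2)+4\eta_1>0, \]
which are precisely $a<2b$ and $\tfrac b2<a$, that is $\tfrac b2<a<2b$. The symmetry of the two estimates reflects the $a\leftrightarrow b$ (equivalently $e_1\leftrightarrow e_2$) symmetry of the whole setup, and reassuringly the range matches Lemma~\ref{l31suf}. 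I expect the only real obstacle to lie in the homological bookkeeping of the middle paragraph — correctly pinning down the classes of $S_1,S_2$ and the images of $A,B$, together with the harmless labelling ambiguities — since once the period identities are in hand the positivity estimate is immediate.
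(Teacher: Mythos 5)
Your proposal is correct and follows essentially the same route as the paper's proof of Proposition \ref{l31nec}: rationally blow up the pinwheel, identify the result with $X_3$, pin down $[S_1]$ and $[S_2]$ via the lattice isomorphism between the complements, transfer the areas across, and read off positivity on symplectic and exceptional classes. Your Hirzebruch-adapted basis is the paper's standard basis in disguise (take $Z=-2H+3E_1$, $f=H-E_1$, $e_1=E_2$, $e_2=E_3$, so that $f-e_1-e_2=H-E_1-E_2-E_3$ and $Z+5f-e_i=3H-2E_1-E_{i+1}$), your period identities and final estimates are equivalent to the paper's inequalities $\mu_2,\mu_3>0$, and the one step you leave as ``one checks'' --- that $[S_1],[S_2]$ must lie in these classes --- is exactly where the paper invokes its (also unspelled) ``lengthy calculation'' classifying the possible $(-5)$- and $(-2)$-sphere classes, so the two arguments match even in their level of detail.
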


\begin{proof}
Let $L$ be a Lagrangian $L_{3,1}$ pinwheel in $A+B$. We begin with the observation that the subspace of elements of $H_2(S^2\times S^2,\Z)$ that pair trivially with $L$ is spanned by  $A+2B$ and $B+2A$.\par

Rationally blowing-up $L$, we replace it with two symplectic spheres, $S_1$ and $S_2$, of self intersection $(-5)$ and $(-2)$ respectively (see Section \ref{ratblowupsec}). The two spheres intersect at one point positively.\par 

By Theorem \ref{ratblowuponratman}, the blown-up symplectic manifold $(\widetilde{X},\tilde{\omega})$ is again a rational symplectic manifold. Since we introduce two new spheres we have that $b_2(\widetilde{X})=b_2(S^2\times S^2)+2$ which means that we can identify the blown-up space with $(X_3,\tilde{\omega})$, for some symplectic form $\tilde{\omega}$. We will denote the standard periods of $\tilde{\omega}$ with $h,\mu_1,\mu_2,\mu_3$.\par

A lengthy calculation shows that, up to diffeomorphism, the classes that can support $(-5)$ symplectic spheres are $-H-2E_1+E_2+E_3,-2H+3E_1$ and $-E_1-E_2$. Similarly, the classes that can support a $(-2)$ symplectic sphere are $E_1-E_2$ and $H-E_1-E_2-E_3$. 
\par
Recall that the diffeomorphism between $S^2\times S^2-\nu L$ and $X_3-\nu \mathcal{S}_3$ implies the existence of a lattice isomorphism between classes in $H_2(S^2\times S^2,Z)$ which pair trivially $\mod 3$ with $L$ and classes in $H_2(X_3,Z)$ that pair trivially with $\langle S_1, S_2\rangle$. This isomorphism dictates that $S_1$ is in the class $-2H+3E_1$ and $S_2$ is in the class $H-E_1-E_2-E_3$.\par

Since the spheres $S_1$ and $S_2$ are symplectic, there exists positive real numbers $\epsilon_1$ and $\epsilon_2$ such that

\[
    -2h+3\mu_1\coloneqq 9\epsilon_1>0,\quad h-\mu_1-\mu_2-\mu_3\coloneqq 9\epsilon_2>0
\]

In addition, the $\omega_{a,b}$-area of the classes $A+2B,B+2A$ should be the same as the $\tilde{\omega}$-area of $3H-2E_1-E_2$ and $3H-2E_1-E_3$ and therefore we get the following system of equations:

\begin{align*}
    a+2b&=3h-2\mu_1-\mu_2 \\ 
     2a+b&=3h-2\mu_1-\mu_3\\
    9\epsilon_1&=-2h+3\mu_1\\
    9\epsilon_2&=h-\mu_1-\mu_2-\mu_3
\end{align*}
Inverting this system we get

\begin{align*}\label{solved}
    h&=a + b + 3\epsilon_1 - 3\epsilon_2\\
    \mu_1&=\frac{2(a+b)}{3}+5\epsilon_1-2\epsilon_2\\
    \mu_2&=\frac{2a-b}{3}-(\epsilon_1+5\epsilon_2)\\
    \mu_3&=\frac{2b-a}{3}-(\epsilon_1+5\epsilon_2)
\end{align*}
Since $\tilde{\omega}$ is a symplectic form, it satisfies the inequalities $\mu_i>0$ and $h>\mu_i+\mu_j$. In particular, $\mu_2>0$ and $\mu_3>0$ imply that $2a>b$ and $2b>a$. $\square$
\end{proof}

Putting everything together, we can prove

\begin{Theorem}\label{jquestion}
In $(S^2\times S^2,\omega_\lambda)$, the $\Z_3$-homology class $[A+B]$ carries a Lagrangian $L_{3,1}$ pinwheel if and only if

\[\frac{1}{2}<\lambda<2.\]

\end{Theorem}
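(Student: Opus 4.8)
The plan is to combine the sufficiency result already established by Evans (Lemma \ref{l31suf}) with the necessity result just proven (Proposition \ref{l31nec}), specializing the latter to the case $b=1$. Recall that by definition $\omega_\lambda=\omega_{\lambda,1}$, so setting $a=\lambda$ and $b=1$ in Proposition \ref{l31nec} immediately yields that if $(S^2\times S^2,\omega_\lambda)$ carries a Lagrangian $L_{3,1}$ pinwheel in the class $A+B$, then $\frac{1}{2}<\lambda<2$. This handles the \emph{only if} direction in one line.

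For the \emph{if} direction, I would invoke Lemma \ref{l31suf} directly: Evans constructs a visible Lagrangian $L_{3,1}$ pinwheel in $(S^2\times S^2,\omega_\lambda)$ precisely for $\frac{1}{2}<\lambda<2$. A visible Lagrangian is in particular a genuine Lagrangian $L_{3,1}$ in the required homology class $A+B$, so sufficiency is immediate. Thus the two directions assemble into the claimed equivalence.

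The only point requiring a word of care is that Proposition \ref{l31nec} is stated for the class $A+B$ with the two product factors having \emph{arbitrary} areas $a,b$, whereas the theorem is phrased in terms of the single parameter $\lambda$. Since the symplectic geometry only depends on the ratio $a/b$ up to an overall scaling of the form, specializing to $b=1$ loses no generality, and the two strict inequalities $b/2<a$ and $a<2b$ become exactly $\frac{1}{2}<\lambda<2$. Hence:

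\begin{proof}
The sufficiency of the inequality $\frac{1}{2}<\lambda<2$ is precisely the content of Lemma \ref{l31suf}, which produces a visible (hence genuine) Lagrangian $L_{3,1}$ pinwheel in the class $A+B$. For necessity, recall that $\omega_\lambda=\omega_{\lambda,1}$, so we may apply Proposition \ref{l31nec} with $a=\lambda$ and $b=1$. The proposition then asserts that the existence of a Lagrangian $L_{3,1}$ in $A+B$ forces $\frac{1}{2}<\lambda<2$. The two directions together give the stated equivalence.
\end{proof}

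Since both ingredients are already in hand, there is no genuine obstacle remaining; the entire difficulty of the theorem is concentrated in the rational blow-up computation carried out in Proposition \ref{l31nec}, where the delicate step is the identification of the homology classes of the $(-5)$- and $(-2)$-spheres $S_1,S_2$ via the lattice isomorphism induced by the symplectomorphism of complements.
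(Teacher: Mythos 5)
Your proof is correct and matches the paper's own argument exactly: necessity follows from Proposition \ref{l31nec} with $a=\lambda$, $b=1$, and sufficiency is Evans' construction in Lemma \ref{l31suf}. Your added remark that specializing to $b=1$ loses no generality (since only the ratio $a/b$ matters up to scaling) is a harmless clarification beyond what the paper states.
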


\begin{proof}
On one hand, the necessity of the inequality $\frac{1}{2}<\lambda<2$ follows from Proposition \ref{l31nec}, setting for $a=\lambda$ and $b=1$. On the other hand, Lemma \ref{l31suf} shows that if the inequality holds, then the $L_{3,1}$ pinwheel indeed exists.
\end{proof}

\printbibliography
\end{document}